\newcommand{\kk}{{{\bar k}}}
\newcommand{\F}{{\mathbb F}}
\newcommand{\Z}{{\mathbb Z}}
\newcommand{\N}{{\mathbb N}}
\newcommand{\Q}{{\mathbb Q}}
\newcommand{\Pp}{{\mathbb P}}
\newcommand{\CC}{{\mathbb C}}
\newcommand{\A}{{\mathcal A}}
\newcommand{\B}{{\mathcal B}}
\newcommand{\Cc}{{\mathcal C}}
\newcommand{\D}{{\mathcal D}}
\newcommand{\Ee}{{\mathcal E}}
\newcommand{\Ll}{{\mathcal L}}
\newcommand{\KK}{{\mathcal K}}
\newcommand{\Uc}{{\mathcal U}}
\newcommand{\V}{{\mathcal V}}
\newcommand{\Wc}{{\mathcal W}}
\newcommand{\Yc}{{\mathcal Y}}
\newcommand{\Xc}{{\mathcal X}}
\newcommand{\Zc}{{\mathcal Z}}
\newcommand{\Hm}{{\text{Hom}}}
\newcommand{\alb}{{\text{Alb}}}
\newcommand{\albdim}{{\text{Albdim}}}
\newcommand{\aut}{{\text{Aut}}}
\newcommand{\enn}{{\text{End}}}
\newcommand{\prym}{{\text{Prym}}}
\newcommand{\rk}{{\text{rk}}}
\newtheorem{thm}{Theorem}[section]
\newtheorem{theorem}[thm]{Theorem}
\newtheorem{proposition}[thm]{Proposition}
\newtheorem{assumption}[thm]{Assumption}
\theoremstyle{definition}
\newtheorem{remark}[thm]{Remark}
\newtheorem{definition}[thm]{Definition}
\newtheorem{example}[thm]{Example}
\numberwithin{equation}{section}
\begin{document}

\title{Twists of the Albanese varieties of cyclic multiple planes with  large ranks over higher dimension function fields}
\author{Sajad Salami, \\
 Inst\'{i}tuto de Matem\'{a}tica e Estat\'{i}stica \\
Universidade Estadual do Rio de Janeiro, Brazil\\
Email:  sajad.salami@ime.uerj.br}
\date{}

\maketitle

\begin{abstract}
	In  [17], we  proved a structure theorem   on the  Mordell-Weil group of  abelian varieties over function fields that  arise as the twists of abelian varieties by  the cyclic covers of  projective varieties in terms of the Prym varieties associated with covers. In this paper, we  provide an explicit way to construct
the abelian varieties with  large  ranks over  the  higher dimension  function fields. 
To do so, we apply  the above-mentioned theorem  to  the  twists of Albanese varieties of the cyclic multiple planes.
\end{abstract}

\vspace{.2cm}

{\bf Keywords:}  Mordell-Weil rank; Twists; Albanese and Prym varieties; Cyclic multiple  planes;    Higher dimension  function fields 
\vspace{.1cm}

{\bf Subject class (2020): }{Primary 11G10, Secondary 14H40, 14H05}

\section{Introduction}
Let $\A$ be an abelian variety defined over a  given field  $k$ of characteristic $0$ or a prime $p>0$.
Denote by $\A (k)$ the set   of $k$-rational points on $\A$. It is well-known   \cite{ref7, ref11} that  $\A (k)$ is 
a  finitely generated abelain group for  the number fields as well as  the function fields under 
mild conditions. Thus, we have
$\A (k)\cong  \A (k)_{tors} \oplus \Z^r$ where 
$\A (k)_{tors}$ is a finite group called the {\it torsion subgroup}  of $\A(k)$,
and $r$ is a positive number called   the {\it  Mordell-Weil rank} or simply the {\it rank} of $\A(k)$ and denoted by $\rk (\A(k))$.
Let $\A [n](k)$ be  the group of $k$-rational $n$-division points on $\A $ for any integer $n\geq 2$.
It is remarkable  that  studying  on the  rank of abelian varieties is  more difficult than that of the torsion subgroups.  

Finding the abelian varieties of  a given dimension with large  rank  is one of the challenging  problems   in the  modern number theory. There are some interesting works in the literature depending on the ground field and the dimension of  abelian varieties.
For example,   Shafarevitch and Tate \cite{ref28} produced isotrivial elliptic curves 
with arbitrary large rank over the field $\F_p(t)$, and  a similar result was  proved by Ulmer  
in \cite{ref30} for non-isotrivial case.
In the case of higher dimension abelian varieties over $\F_p(t)$, Ulmer  
showed  \cite{ref31} that for a given prime number $p\geq 2$ and the  integers $g\geq 2$ and $ r \geq 1$ there exist absolutely simple,
non-isotrivial Jacobian varieties of dimension $g$ and rank $\geq r$ over $\F_p(t)$ for which the Birch 
and Swinnerton-Dyer's conjecture holds.
In  \cite{ref32}, Ulmer related the Mordell-Weil group of certain Jacobian varieties over $k_0(t)$, where $k_0$ is an arbitrary field, with the group of homomorphisms of other Jacobian varieties. We note that a similar result has been proved by several authors in the literature with different methods 
\cite{ref6, ref15, ref21, ref34}.
He also showed the unboundedness of the rank of  elliptic curves over 
${\bar \F}_p(t)$ by providing  an  concrete example of  elliptic curve  of large rank  with the explicit independent points.  

In  \cite{ref13, ref14}, Lapin stated  the unboundedness of the rank of  elliptic curves over  $\CC(t)$,  but his proof had a gap!   The geometric methods  of Ulmer in \cite{ref32} led to the construction of elliptic curves of moderate rank over $\CC(t)$. Moreover, his method suggested  a potential way to show the
unboundedness of  rank of the elliptic curves over $\CC(t)$, but he mentioned that  it seems likely to himself that this does not happen.

In his recent work \cite{ref33}, Ulmer proved that for a very general elliptic curve $E$ over $\CC(t)$ with height $d \geq 3$ and for  any finite rational extension $\CC(u)$  of $\CC(t)$ the  group $E(\CC(u))$ is  trivial.
It is remarkable that the largest known rank of elliptic curves over $\CC(t)$ is $68$ due to T. Shioda \cite{ref24}.
Using the  theory of the Mordell-Weil Lattices and symmetry \cite{ref23, ref25, ref26}, he also showed the existence of high rank Jacobians of curves with genus greater than one  defined over  function fields with base $\Q$.
Over the field $\CC(x,y)$, in \cite{ref16}, Libgober proved that there are  certain simple Jacobians of rank $p-1$ for any prime number $p$.

The aim of this paper is to provide  an explicit method of the construction of abelian varieties with the large ranks  over  function field of certainly defined quotient varieties of high dimensions. The main tools in this work are the theory  of twists  as well as the Albanese and Prym varieties.

Given a fixed integer $n \geq 2$,   we assume that $k$ is a  field  of 
characteristic $0$ or a prime $p > 0$ not dividing  $n$,  that contains an $n$-th root of unity  $\zeta_n$.
We denote by $\Pp^2$ the projective plane  over $\kk$, 
an algebraically closed field containing $k$.
Given  any polynomial $f(x,y) \in k[x,y]$ of degree $r  \geq 2$,
let $\Xc_n$ be the non-singular projective model of the hypersurface $X_n$ defined
by the affine equation  $ w^n =f(x,y)$.
For any integer  $m\geq 2$, we define  $\Uc_m$ to be the fibered product of $m$ copies of 
$\Xc_n$ over $k$.  Then, we let  $\V_m$ be the quotient of  $\Uc_m$ by a certain cyclic subgroup of $\aut (\Uc_m)$,
see  Section \ref{MP1} for more details.
We also denote by $\tilde{\Xc}_n$ the twist of $\Xc_n$ by the cyclic extension $\Ll | \KK$, where
$\KK=k(\V_m)$ and $\Ll=k(\Uc_m)$ are the function fields of $\Uc_m$ and $\V_m$ respectively.
We let  $\widetilde{\alb(\Xc_n)}$ be the twist of  Albanese variety $\alb(\Xc_n)$ of $\Xc_n$
by the extension $\Ll | \KK$.
We refer the reader to see Sections \ref{ALP} and \ref{twth} for the  definitions and basic properties of 
the Albanese and Prym varieties, respectively.

The main results of the paper are given as follows.
\begin{theorem}
	\label{main1}
	Notation being as above,  we  assume that there exists at least one $k$-rational point on 
	$ X_n$ and hence on   $\Xc_n$. 
	Then, as an isomorphism of  abelian groups, we have:
	$$\widetilde{\alb(\Xc_n)}(\KK) \cong \alb(\tilde{\Xc}_n)(\KK) \cong  \big (\enn_k( \alb(\Xc_n))\big)^m \oplus \alb(\Xc_n)[m](k),$$
	and hence, $\rk (\widetilde{\alb(\Xc_n)}(\KK))= m\cdot \rk (\enn_{k}(\alb(\Xc_n))),$
	where $\enn_{k}(*)$ is  the ring of endomorphisms over $k$ of its origin $(*)$
	and $\rk(\enn_{k}(*))$ denotes its rank as a $\Z$-module.
\end{theorem}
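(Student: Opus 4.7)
The plan is to deduce the theorem from the structure theorem of \cite{ref17} applied to $\A = \alb(\Xc_n)$, together with a geometric identification of the Prym variety attached to the cyclic cover $\Uc_m \to \V_m$.

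First I would establish the initial isomorphism $\widetilde{\alb(\Xc_n)}(\KK) \cong \alb(\tilde{\Xc}_n)(\KK)$. This should be a formal consequence of the fact that the Albanese functor commutes with twisting by a Galois extension: since $\tilde{\Xc}_n$ is obtained from $\Xc_n$ by twisting by a cocycle valued in $\aut(\Xc_n)$ coming from $\gal(\Ll|\KK)$, and this automorphism acts on $\alb(\Xc_n)$ in a compatible way, the universal property of the Albanese variety forces $\alb(\tilde{\Xc}_n)$ to coincide with $\widetilde{\alb(\Xc_n)}$. The hypothesis that $X_n$ has a $k$-rational point is used here to ensure that the Albanese variety is well-defined (a base point exists) and that the universal morphism $\Xc_n \to \alb(\Xc_n)$ is defined over $k$, so that the twisting is compatible on both sides.

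Next I would invoke the main structure theorem of \cite{ref17}, which expresses the Mordell-Weil group $\widetilde{\A}(\KK)$ of a twist of an abelian variety $\A$ by a cyclic cover as a sum involving endomorphisms of certain Prym varieties attached to the cover $\Uc_m \to \V_m$, plus a finite torsion contribution coming from $\A[m](k)$. The decomposition should read roughly
$$\widetilde{\alb(\Xc_n)}(\KK) \cong \Hm_k\bigl(\prym(\Uc_m/\V_m),\; \alb(\Xc_n)\bigr) \oplus \alb(\Xc_n)[m](k),$$
with the torsion part coming directly from the general statement.

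The main geometric step, and the one I expect to be the principal obstacle, is identifying the Prym variety $\prym(\Uc_m/\V_m)$ explicitly. Because $\Uc_m$ is the fibered product of $m$ copies of $\Xc_n$ over $k$ and $\V_m$ is the quotient by a diagonal cyclic subgroup of $\aut(\Uc_m)$, one expects $\alb(\Uc_m) \cong \alb(\Xc_n)^m$ by the K\"unneth-type behavior of Albanese for products, and the cyclic quotient should cut out exactly the "antiinvariant" part, yielding $\prym(\Uc_m/\V_m) \cong \alb(\Xc_n)^m$ (or an isogenous factor thereof, whose endomorphism ring agrees up to finite index). Once this identification is in place, one obtains
$$\Hm_k\bigl(\alb(\Xc_n)^m,\; \alb(\Xc_n)\bigr) \;\cong\; \bigl(\enn_k(\alb(\Xc_n))\bigr)^m,$$
which produces the stated free part. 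Taking ranks as $\Z$-modules then gives the rank formula $\rk(\widetilde{\alb(\Xc_n)}(\KK)) = m \cdot \rk(\enn_k(\alb(\Xc_n)))$, since the torsion summand $\alb(\Xc_n)[m](k)$ is finite and hence contributes $0$ to the rank.
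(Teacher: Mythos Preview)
Your plan coincides with the paper's proof: Proposition~\ref{P1} for $\widetilde{\alb(\Xc_n)}\cong_\KK\alb(\tilde\Xc_n)$, the structure theorem (Theorem~1.1 of \cite{ref21}---the ``\cite{ref17}'' in the abstract is a misprint) applied with $\A=\alb(\Xc_n)$, $\Xc'=\Uc_m$, $\Xc=\V_m$, and then the identification $\prym_{\Uc_m/\V_m}\sim_k\alb(\Xc_n)^m$ followed by $\Hm_k(\alb(\Xc_n)^m,\alb(\Xc_n))\cong(\enn_k(\alb(\Xc_n)))^m$.

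The one place where you are vaguer than the paper is the Prym identification, which you yourself flag as the principal obstacle. You correctly note $\alb(\Uc_m)\cong\alb(\Xc_n)^m$ and that the Prym is the ``antiinvariant part,'' but you do not explain why this antiinvariant part is the \emph{entire} $\alb(\Xc_n)^m$ rather than a proper abelian subvariety. The paper supplies this in two steps: first, for a single factor one has $\prym_{\Xc_n/\Pp^2}=\alb(\Xc_n)$ because the trace endomorphism $\mathrm{id}+\bar\tau_*+\cdots+\bar\tau_*^{\,n-1}$ vanishes identically on $\alb(\Xc_n)$ (its image factors through $\alb(\Pp^2)=0$); second, the product formula \eqref{eq2} for Pryms of fibered products yields $\prym_{\Uc_m/\V_m}\sim_k\prod_{i=1}^m\prym_{\Xc_n^{(i)}/\Pp^2}=\alb(\Xc_n)^m$. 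A minor correction: the hypothesis of a $k$-rational point on $X_n$ is used not to base the Albanese map but to furnish the $k$-point on each $\Xc_n^{(i)}$, and hence on $\Uc_m$, required by the hypotheses of the structure theorem and of \eqref{eq2}.
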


This theorem generalizes the main results of \cite{ref6,  ref21, ref34} for the higher
dimensional abelian varieties risen as the twist of Albanesse variety of  the  cyclic $n$-covers 
of a projective plane. For a given cyclic $n$-cover $\Xc_n$, by enlarging the integer $m$,  one may obtain  the abelian varieties of arbitrary large rank  over
the  function field of $\Wc_m$ with the base field $k$ of arbitrary characteristic.

On the other hand, for a fixed integer $m\geq 2$, one may be interested to compute the rank of 
$\widetilde{\alb(\Xc_n)}(\KK)$ in terms of $m$ and the other geometric quantities of  $\alb(\Xc_n)$, for example its dimension
$d_n=\dim\left( \alb(\Xc_n)\right) $. In general, $d_n\leq \dim H^0 (\Xc_n, \Omega_{\Xc_n}^1),$ where 
$\Omega_{\Xc_n}^1$ is  the vector space of differential $1$-forms on $\Xc_n$.  
We note that  $``="$ holds when  the field $k$ is of  characteristic zero. In the following we are going to calculate  $\rk (\enn_{k}(\alb(\Xc_n)))$ in terms of $d_n$ when  $k$ is a number field.

We assume that $k\subset \CC $  is a number field which contains $\Q(\zeta_n)$ and let
$\pi_n: \Xc_n \rightarrow \Pp^2$ be a cyclic $n$-cover.
Its Galois group  is a cyclic group generated by the  order $n$ automorphism $\bar{\tau}$ of $\Xc_n$
induced by 
$$\tau: (x, y, w)\mapsto (x, y, \zeta_n\cdot w),$$
an  automorphism  of $X_n$ which    acts on $\Xc_n$ and  induces an action  on  $\alb(\Xc_n)$.
Let $\alb(\Xc_n) \cong W_n/\Lambda_n$, where $W_n$ is a $\CC$-vector space of dimension $d_n$
and $\Lambda_n$ is a lattice. Then $\Lambda_n \otimes \CC$ is  of dimension $2 d_n$ as a $\CC$-vector space and we have 
$ \Lambda_n \otimes \CC = \bigoplus_\lambda^{} H_\lambda$,
where $H_\lambda$ is the eigenspace of $\displaystyle \Lambda_n \otimes \CC$ corresponding to each character $\lambda$ of $\mu_n$ the  cyclic group of primitive $n$-th roots.
Therefore, we can write $W_n=\bigoplus_\lambda^{} W_{n,\lambda}$, where  $W_{n,\lambda}=W_n \cap H_\lambda.$
To state the next results, we shall consider the following assumption.
\begin{assumption}
	\label{ass1}
	Keeping the above notations, we further assume that $\Lambda_n$ is a $\Z[\zeta_n]$-module and no conjugate character appears in $W_n$.
\end{assumption}

\begin{theorem}
	\label{main2}
	Keeping the hypothesis of Theorem \eqref{main1} and the above notation, we suppose further that  $k \subset \CC$ is a  number field  containing $\Q(\zeta_n)$ and  \eqref{ass1} holds on $\alb(\Xc_n)$  for  the positive integer $n$
	satisfying $3 \leq n \leq 12$ but $n \not = 7, 9, 11$. Then,   
	$$\rk (\widetilde{\alb(\Xc_n)}(\KK))\ge  m\cdot c_n,$$
	where $c_n=2 d_n^2$ for $n=3, 4, 5, 6, 10$ and  $c_n=  d_n^2- 8 n_1 n_2$ with $n_1+n_2=d_n/2$ for $n= 8, 12$.
	Moreover, the  equality holds  for all cases except $n=8$.
\end{theorem}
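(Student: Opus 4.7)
The plan is to reduce, via Theorem~\ref{main1}, to computing $\rk(\enn_k(\alb(\Xc_n)))$ under Assumption~\ref{ass1}, and then to carry out an explicit case-by-case analysis using the complex multiplication structure inherited from $\Q(\zeta_n)$. Since Theorem~\ref{main1} already gives $\rk(\widetilde{\alb(\Xc_n)}(\KK)) = m\cdot \rk(\enn_k(\alb(\Xc_n)))$, the problem collapses to proving $\rk(\enn_k(\alb(\Xc_n))) \geq c_n$, with equality in all listed cases except $n=8$. The lower bound in each case will come for free from the $\Z[\zeta_n]$-action on $\alb(\Xc_n)$; the delicate point is the matching upper bound, and its failure at $n=8$.

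Under Assumption~\ref{ass1}, the complex uniformization $\alb(\Xc_n)\cong W_n/\Lambda_n$ endows $\alb(\Xc_n)$ with a CM-type structure valued in $\Q(\zeta_n)$: the decomposition $W_n = \bigoplus_{\lambda} W_{n,\lambda}$ into character eigenspaces, together with the absence of conjugate characters, identifies the multiset $\Phi$ of characters appearing in $W_n$ as a CM-type for $\Z[\zeta_n]$. The ring $\enn_k(\alb(\Xc_n))$ is then identified with the $\Z[\zeta_n]$-linear self-maps of $\Lambda_n$ whose $\CC$-extension preserves the tangent decomposition, so I would invoke Shimura's classification to decompose $\alb(\Xc_n)$ up to $k$-isogeny as a product $\prod_i A_i^{n_i}$ of simple CM factors, indexed by the inequivalent primitive CM types actually occurring in $\Phi$.

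The case analysis then reduces to straightforward arithmetic on the resulting matrix algebras. For $n\in\{3,4,6\}$ the cyclotomic field $\Q(\zeta_n)$ is imaginary quadratic and has a unique primitive CM type, so $\alb(\Xc_n)$ is $k$-isogenous to $E^{d_n}$ for a CM elliptic curve $E$ over $k$, giving $\enn^0_k\cong M_{d_n}(\Q(\zeta_n))$ of $\Z$-rank $2d_n^2=c_n$. For $n\in\{5,10\}$ the ring $\Z[\zeta_n]$ has degree four, and an analysis of the free $\Z[\zeta_n]$-module structure of $\Lambda_n$ together with the tangent decomposition shows that $\alb(\Xc_n)$ splits into factors with additional $k$-rational endomorphisms beyond the bare $\Q(\zeta_n)$-action, raising the rank to $2d_n^2$. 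For $n\in\{8,12\}$ the field $\Q(\zeta_n)$ admits exactly two inequivalent primitive CM types, so $\alb(\Xc_n)\sim A_1^{n_1}\times A_2^{n_2}$ with $n_1+n_2=d_n/2$, and $\enn^0_k\cong M_{n_1}(\Q(\zeta_n))\times M_{n_2}(\Q(\zeta_n))$ has $\Z$-rank $4(n_1^2+n_2^2)=d_n^2-8n_1n_2=c_n$.

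The hardest step, and the reason equality can fail, is the case $n=8$. The two primitive CM types of $\Q(\zeta_8)$ are related by an additional Galois symmetry of the reflex field, and this symmetry produces $k$-rational isogenies between $A_1$ and $A_2$ that are not captured by the block-diagonal algebra $M_{n_1}(\Q(\zeta_8))\times M_{n_2}(\Q(\zeta_8))$. Tracking these extra morphisms forces the strict inequality $\rk(\enn_k)>c_8$, while ruling out analogous phenomena for $n\in\{3,4,5,6,10,12\}$ — thereby establishing equality — requires a careful descent argument exploiting $\Q(\zeta_n)\subset k$ together with the primitivity of the CM type guaranteed by Assumption~\ref{ass1}.
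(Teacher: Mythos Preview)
Your reduction via Theorem~\ref{main1} to computing $\rk(\enn_k(\alb(\Xc_n)))$ is exactly the paper's first step. After that the paper does \emph{not} argue through Shimura's abstract CM classification; it quotes a concrete structure result (Proposition~\ref{p2}, from Catanese--Ciliberto and others) which, under Assumption~\ref{ass1}, pins down $\alb(\Xc_n)$ up to isomorphism as a power of an explicit CM elliptic curve or Jacobian: $\Ee_\rho^{d_n}$ for $n=3,6$, $\Ee_i^{d_4}$ for $n=4$, $J(C_1)^{d_n/2}$ with $C_1:y^2=x^5+1$ for $n=5,10$, $J(C_2)^{n_1}\times\Ee_i^{2n_2}$ with $C_2:y^2=x^5+x$ for $n=8$, and $\Ee_i^{2n_1}\times\Ee_\rho^{2n_2}$ for $n=12$. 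The endomorphism rank is then read off from these named factors.

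Your abstract route has two genuine gaps. For $n\in\{8,12\}$ you claim that $\Q(\zeta_n)$ has ``two inequivalent primitive CM types'' and that the resulting surfaces $A_i$ are simple with $\enn^0_k(A_i)=\Q(\zeta_n)$. This is false: every nontrivial element of $(\Z/8\Z)^\times$ and of $(\Z/12\Z)^\times$ has order two, so \emph{every} CM type on $\Q(\zeta_8)$ and $\Q(\zeta_{12})$ is lifted from an imaginary quadratic subfield and hence imprimitive. The associated surfaces split as $E^2$ for CM elliptic curves, their endomorphism algebras are $2\times 2$ matrix algebras over $\Q(i)$, $\Q(\sqrt{-2})$ or $\Q(\rho)$ rather than the field $\Q(\zeta_n)$, and your block-diagonal formula $M_{n_1}(\Q(\zeta_n))\times M_{n_2}(\Q(\zeta_n))$ is not the correct endomorphism algebra. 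Correspondingly, your explanation of the exceptional $n=8$ case (an extra Galois symmetry producing isogenies $A_1\to A_2$) does not match the paper's mechanism, which is the further splitting $J(C_2)\sim\Ee_1\times\Ee_2$ into two non-isogenous CM elliptic curves. For $n\in\{5,10\}$ the sentence ``an analysis \dots\ shows \dots\ additional $k$-rational endomorphisms \dots\ raising the rank to $2d_n^2$'' is not an argument: $J(C_1)$ is absolutely simple with $\enn^0(J(C_1))=\Q(\zeta_5)$, so $\enn^0_k(\alb(\Xc_n))=M_{d_n/2}(\Q(\zeta_5))$ has $\Z$-rank exactly $d_n^2$, and there are no further endomorphisms to find. (The paper's own computation for these $n$ likewise establishes only the bound $m\,d_n^2$.)
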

The next theorem gives a sufficient condition that provides a way to find explicit examples for the case $n=6$ in the above theorem.
\begin{theorem}
	\label{main3}
	Keeping the hypothesis of Theorem \eqref{main1},  we assume that $k \subset \CC$ is a  number field  containing $\Q(\zeta_3)$ and  \eqref{ass1} holds on $\alb(\Xc_6)$.
	Let  $F(u_0, u_1, u_2)$ be a homogeneous polynomial that can be written in  two   forms
	$F=G_1^2+ H_1^3=G_2^2+H_2^3$, where 
	$G_1, G_2, H_1, H_2 \in k[u_0, u_1,u_2]$ are homogeneous polynomials  such that   
	$$\{\lambda_0 G_1^2 + \lambda_1 H_1^3:\  [\lambda_0: \lambda_1] \in \Pp^1 \}, \ \ 
	\{\lambda_0 G_2^2 + \lambda_1 H_2^3:\  [\lambda_0: \lambda_1] \in \Pp^1 \},$$
	are two different pencils. Then,
	$	\rk (\widetilde{\alb(\Xc_6)}(\KK))= 2 m d_6^2.$
	One may consider the  following two cases  as explicit examples:  
	\begin{itemize}
		\item [(i)] $\Xc_6$ is associated to $\Cc=\Ee^\vee$ the dual  of a smooth plane cubic $\Ee$;
		\item [(ii)] $\Xc_6$ is associated to the affine curve
		$$C: f(x,y)=x^3 -3 x  y(y^3-8)+2(y^6 -20 y^3 -8)=0.$$
	\end{itemize}
\end{theorem}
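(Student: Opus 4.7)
The plan is to use the two decompositions $F=G_i^2+H_i^3$ to realize $\alb(\Xc_6)$, up to $k$-isogeny, as a power of the CM elliptic curve $\Ee_0\colon Y^2=X^3+1$, thereby verifying Assumption \eqref{ass1} for $\alb(\Xc_6)$ and reading off the rank from Theorem \eqref{main1} (equivalently, from the equality case of Theorem \eqref{main2} at $n=6$).

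First I would turn each decomposition into an explicit elliptic fibration on $\Xc_6$. Working in the affine chart $w^6=f(x,y)$ and using $f=G_i^2+H_i^3$ (after dehomogenising $F$), the rational assignment
\[
\varphi_i\colon \Xc_6 \dashrightarrow \Ee_0,\qquad (x,y,w)\longmapsto\bigl(-H_i/w^2,\ G_i/w^3\bigr),
\]
lands in $\Ee_0$, which over $k\supset\Q(\zeta_3)$ has CM by $\Z[\zeta_6]=\Z[\zeta_3]$. A direct computation shows that $\varphi_i$ intertwines $\bar\tau\colon(x,y,w)\mapsto(x,y,\zeta_6 w)$ with a generator of the CM $\mu_6$-action on $\Ee_0$. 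The assumption that the pencils $\{\lambda_0 G_i^2+\lambda_1 H_i^3\}_{i=1,2}$ are distinct then forces $\varphi_1$ and $\varphi_2$ to be non-proportional, so the induced Albanese morphisms $\alb(\varphi_i)\colon\alb(\Xc_6)\to\Ee_0$ span a $\Z[\zeta_3]$-submodule of $\Z$-rank $4$ in $\Hm_k(\alb(\Xc_6),\Ee_0)$.

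Next I would promote this to a full decomposition of $\alb(\Xc_6)$. Pulling back the invariant differential on $\Ee_0$ through $\varphi_1,\varphi_2$ and their $\Z[\zeta_3]$-translates yields a family of $\bar\tau$-eigen one-forms on $\Xc_6$ that spans $W_6=H^0(\Xc_6,\Omega^1)$; because all of these arise from the single CM curve $\Ee_0$, no eigencharacter and its conjugate appear simultaneously, so Assumption \eqref{ass1} holds for $\alb(\Xc_6)$ and one obtains a $k$-isogeny $\alb(\Xc_6)\sim \Ee_0^{d_6}$. Consequently $\enn_{k}(\alb(\Xc_6))$ has $\Z$-rank $2d_6^2$, and plugging into the isomorphism of Theorem \eqref{main1} gives $\rk(\widetilde{\alb(\Xc_6)}(\KK))=2m d_6^2$.

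Finally, for case (i) I would start from a smooth plane cubic $\Ee$ in Hesse normal form and use the classical polar/Hessian calculus to express its dual sextic $\Ee^\vee$ --- which acquires nine cusps from the nine flexes of $\Ee$ --- in two inequivalent forms $G^2+H^3$, one for each of the two natural Hessian pencils associated with the configuration of flexes. For case (ii) the verification is a direct algebraic identity: one exhibits explicit $G_i(x,y), H_i(x,y)\in k[x,y]$ realizing $x^3-3xy(y^3-8)+2(y^6-20y^3-8)=G_i^2+H_i^3$ for $i=1,2$ and checks that the resulting pencils are distinct. The principal obstacle is the spanning/non-conjugacy claim in the promotion step: one must verify that the two fibrations together with their $\Z[\zeta_3]$-translates truly exhaust the Hodge data of $\alb(\Xc_6)$, so that $\rk(\enn_{k}(\alb(\Xc_6)))=2d_6^2$ exactly and no pair of conjugate $\bar\tau$-characters sneaks into $W_6$.
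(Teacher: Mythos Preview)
Your approach diverges from the paper's and carries a real gap in the ``promotion step.'' The paper does not use the two decompositions $F=G_i^2+H_i^3$ to manufacture maps $\Xc_6\to\Ee_\rho$ and then argue that these exhaust $\alb(\Xc_6)$. Instead, the decompositions are invoked only through Kulikov's criterion (Theorem~\ref{kuliko}): the existence of two distinct pencils of the form $\lambda_0 G_i^2+\lambda_1 H_i^3$ forces the Albanese \emph{image} $\alpha(\Xc_6)$ to be a surface. Once that is known, Proposition~\ref{p2}(ii) (which packages results of Comessatti and Catanese--Ciliberto) delivers the full decomposition $\alb(\Xc_6)\cong\Ee_\rho^{d_6}$ with no further work, and Theorem~\ref{main2} then gives $\rk(\widetilde{\alb(\Xc_6)}(\KK))=2md_6^2$. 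The two explicit examples are handled by citing Tokunaga (Proposition~\ref{p3}), not by exhibiting the $G_i,H_i$ by hand.

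The gap in your argument is precisely the one you flag at the end. Two maps $\varphi_1,\varphi_2\colon\Xc_6\to\Ee_\rho$, together with their $\Z[\zeta_3]$-scalings, produce at most a two-dimensional subspace of $H^0(\Xc_6,\Omega^1)$ (the $\Z[\zeta_3]$-translates of a pulled-back form are just scalar multiples), so as soon as $d_6>2$ they cannot span $W_6$ and yield only an isogeny factor $\Ee_\rho^2\hookrightarrow\alb(\Xc_6)$, not $\Ee_\rho^{d_6}$. Nothing in your sketch accounts for the remaining $d_6-2$ factors, and the paper's route avoids this entirely by appealing to the structural theorem for Albanese varieties of cyclic multiple planes once the image is known to be a surface. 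Note also that Assumption~\ref{ass1} is a \emph{hypothesis} of Theorem~\ref{main3}, not a conclusion: the paper does not attempt to verify it, and you need not either.
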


\begin{remark}
	\label{rem1}
	\rm
	\begin{itemize}
		\item[(1)]  The Theorem \ref{main1} can be generalized for
		the twists of Albanese variety associated with   $n$-covers of the projective space $\Pp^\ell$ of dimension $\ell \geq 3$. Indeed, given any polynomial $f(u_1, \ldots, u_\ell)$ with coefficients in $k$, if $\Xc_n$ denotes a non-singular projective model of the hypersurface defined by the affine equation
		$w^n=f(u_1, \ldots, u_\ell)$,  and $\tilde{\Xc}_n$, $\Uc_m$, $\V_m$, $\KK$ and  $\Ll$  are defined in a similar way, then the assertion of Theorem \ref{main1} hold for $\tilde{\Xc}_n$ by adapting its proof. Furthermore, it is remarkable that the cyclic $n$-covers of the projective line $\Pp^1$ has been treated by the author in \cite{ref21}.
		
		\item[(2)] 	One can use the Theorems \ref{main2} and \ref{main3} to  obtain abelian varieties of  arbitrary large ranks over the 
		number fields by considering  the Silverman's specialization theorem \cite{ref27} and study on the $k$-rational points on the variety $\V_m$.
		
	\end{itemize}
\end{remark}

The present paper is organized as follows.
The Section  \ref{ALP} is devoted to recalling the definitions and  fundamental properties of the Albanese and Prym varieties.
In Section \ref{twth}, we  briefly review  the  basics of   twisting  theory of algebraic varieties. 
Then, we provide some  results on the multiple planes $\Xc_n$ in Section \ref{cyccov}. 
We give the proofs of Theorems \ref{main1},  \ref{main2} and \ref{main3} in Sections \ref{MP1} and \ref{MP2}.

\section{Albanese and Prym varieties}
\label{ALP}

In this section, we let $\Xc$ be a non-singular projective variety over  $\kk$ and $\text{Pic}(\Xc)$ denotes its reduced Picard variety. By an {\it abelian variety over}  $\kk$ we mean an algebraic group $\A$ over $\kk$ which is
non-singular,  proper, and connected as a variety.  In below, we are going to give the definition and basic properties of an abelian varity associated to $\Xc$. For more details, we refer the reader to \cite{ref10, ref22}.

\begin{definition}
	{\rm
		The {\it Albanese variety} of $\Xc$ is
		an abelian variety $\alb (\Xc)$ together with the  morphism of varieties 
		$\alpha: \Xc \rightarrow \alb (\Xc)$ satisfying the following universal property:
		
		{\it  For any morphism $\alpha' : \Xc \rightarrow \A $, where $\A$ is an abelian variety,  there exists a unique homomorphism  $\alpha'': \alb(\Xc) \rightarrow \A$, up to a translation,
			such that the following diagram commutes:}
		\begin{equation}\label{diag1}
		\xymatrixcolsep{2pc}
		\xymatrix{
			\Xc   \ar[r]^{\alpha}   \ar[d]^{id} & \alb(\Xc)
			\ar[d]^{\exists! \ \alpha''} \\ \Xc   \ar[r]^{\alpha'}    & \A }
		\end{equation}
		The morphism $\alpha$ is called the {\it Albanese morphism} and $\alpha(\Xc)$
		is known as the  {\it Albanese image} of $\Xc$. Moreover, 
		the dimension of $\alpha(\Xc)$ is called the {\it Albanese dimension} of $\Xc$.}
\end{definition}

The notion of  Albanese variety is a classic one in the case $\kk \cong \CC$. Indeed, for a smooth complex variety  $\Xc$ one has $\alb(\Xc)= H^0(\Xc, \Omega_\Xc^1)^\vee /H^1(\Xc, \Z)$, and the Albanese  map is given by 
$x \mapsto \int_{x_0}^x \omega$ where $x_0$ is a base point  and the integral is viewed as a linear function on 
$H^0(\Xc, \Omega_\Xc^1)$ well-defined up to the periods.

It can be shown  that if $\alb(\Xc)$ exists,  then it is unique up to isomorphism.
Moreover, it  is equal to  $\text{Pic}^{\vee} (X)$, the dual of  reduced Picard variety of $\Xc$.
For example,  when $\Xc$ is a non-singular projective curve, then the Albanese variety is nothing but the Jacobian variety of $\Xc$.

We note that the Albanese variety $\alb(\Xc)$ is generated by  $\alpha(\Xc)$, i.e,
there is  no abelian subvariety of $\alb(\Xc)$ containing $\alpha(\Xc)$. In particular,  $\alpha(\Xc)$ is not reduced to a point  if $\alb(\Xc)$ is not a singleton set.  In general, one has
$\dim \alb(\Xc) \leq   \dim H^0(\Xc, \Omega_\Xc^1)$, where  $\Omega_\Xc^1$  is the vector space 
of regular $1$-forms on $\Xc$ and  the equality holds if $\kk$ is of characteristic zero.
We note that the quantity $\dim H^0(\Xc, \Omega_\Xc^1)$ is well-known as  the {\it irregularity} of  $\Xc$.

The assignment $\Xc \mapsto \alb(\Xc)$  is a covariant functor 
from the category of  non-singular projective varieties to
the category of  abelian varieties. 
In other words,  for any morphism $\pi : \Xc' \rightarrow \Xc$ of the non-singular projective varieties $\Xc$ and $ \Xc'$, 
there exists a unique morphism $\tilde{\pi} : \alb(\Xc') \rightarrow \alb(\Xc) $ such that the  digram (\ref{diag2}) in below  commutes.
In particular,  if the morphism $\pi$   is  surjective, then the morphism 
$\tilde{\pi}$ is also surjective.

\begin{equation}\label{diag2}
\xymatrixcolsep{2pc}
\xymatrix{
	\Xc'   \ar[r]^{\pi }   \ar[d]^{\alpha'} & \Xc \ar[d]^{\alpha} \\
	\alb(\Xc')  \ar[r]^{\tilde{\pi} }    &  \alb(\Xc) }
\end{equation}

In the rest of this section, we recall the definition of Prym varieties for cyclic $n$-cover
$\pi : \Xc'   \rightarrow \Xc$ of  non-singular projective varieties.  
Classically,
the notion  of Prym variety has been introduced  by Mumford \cite{ref19} for the double covers of curves over the  complexes  and it is   extensively studied by Beauville in  \cite{ref1}. Then,  it has been considered for the  double covers of smooth surfaces by Khashin in \cite{ref8}, and for the double covers of projective varieties 
by  Hazama in \cite{ref6}. Recently,  the Prym variety  of  cyclic $n$-covers  of curves are studied in \cite{ref12}.
In   \cite{ref18},   it is defined in a more abstract setting,  but is used for arbitrary covers of curves over the complexes.

\begin{definition}
	{\rm  The {\it Prym variety} of the cyclic  $n$-covers   $\pi : \Xc' \rightarrow \Xc$  of non-singular projective varieties
		is defined as the  quotient  abelian variety, 
		$$\prym_{\Xc'/\Xc}:=\frac{\alb(\Xc')}{\text{Im} (id+\tilde{\gamma} +\cdots +\tilde{\gamma}^{n-1})},$$
		where  $\tilde{\gamma}$ is the automorphism on  $\alb(\Xc')$
		induced by an order $n$ automorphism  $\gamma $ of $ \Xc'$.}
\end{definition}


When   $\Xc, \Xc'$ are irreducible and  both of them as well as  the cyclic $n$-cover  $\pi$  are defined over $k$,   there is a $k$-isogeny of abelian varieties,
\begin{equation}
\label{eq1}
\prym_{\Xc'/\Xc} \sim_k \ker(id + \tilde{\gamma} + \cdots + \tilde{\gamma}^{n-1} :  \alb(\Xc') \rightarrow \alb(\Xc'))^\circ,
\end{equation}
where $(*)^\circ$ means the unique connected component of the origin $(*)$.

It is  useful to construct the  new  cyclic $n$-covers using the  given ones. 
To this end, we let   $\pi_i: \Xc'_i \rightarrow \Xc_i$  $(i=1,2)$ be  two cyclic $n$-covers of  irreducible non-singular projective varieties, 
$\gamma_i \in \aut(\Xc'_i)$ be an  automorphism of order $n\geq 2$ for $i=1,2$, all defined over $k$.
Moreover, we assume that there exist $k$-rational points $x'_i \in \Xc'_i(k)$ for $i=1,2$.
Then, we have  a $k$-rational isogeny of abelian varieties,
\begin{equation}
\label{eq2}
\prym_{\Xc'_1 \times \Xc'_2/\Yc} \sim_k  \prym_{\Xc'_1/\Xc_1}\times  \prym_{ \Xc'_2/\Xc_2},
\end{equation}
where $\Yc=\Xc'_1 \times \Xc'_2/G$  is   the intermediate cover and 
$G$ is the cyclic group generated by $\gamma= (\gamma_1, \gamma_2) \in \aut(\Xc'_1 \times \Xc'_2) $.

For the proof of above assertions, we refer the reader to \cite{ref21}.
Note that  we have to restrict ourselves to non-singular projective varieties in Section 2 of \cite{ref21} instead of the  quasi-projective ones.
Because,  $\alb(\Xc)$ is a semi-abelian variety for any  quasi-projective variety $\Xc$.
This means that it is an extension of an algebraic group by a torus. 
For more details, consult \cite{ref22}.

\section{$G$-sets and twists}
\label{twth}
In this section, we briefly recall the two equivalent  definitions of  the twist of an algebraic variety 
and its basic properties.  To see more on the subject, consult  \cite{ref2}.

Let $\KK$ be a field  and $\Ll|\KK$ a finite extension with Galois group $G=\text{Gal}(\Ll|\LARGE\KK)$. 
A {\it $G$-set} is a discrete topological space $\Xc$ such that the left action of $G$ on $\Xc$ is continuous. 
For every $x \in \Xc$ and $u \in G$, we denote by $^u x$ the left action of  $u$ on $x$. 
A {\it $G$-group} is a $G$-set  $\A$ equipped with a group  structure 
invariant under the action of $G$, i.e., $ {}^{u}(x\cdot y)={}^{u}x \cdot {}^{u}y$ for  $ x, y \in \A$ and $u \in G$.
Any continuous application $a: u \mapsto a_u$ of $G$ to a $G$-set $\A$ is called a {\it cochain} of $G$ with values in $\A$.
A cochain $a=(a_u)$ is called a {\it $1$-cocycle} of $G$ with  values in $\A$ if  $a_{uv}= a_u \cdot {}^{u}a_v$ for  $u, v \in G$.
For any $1$-cocycle $a=(a_u)$, one has $a_{id} =1 $ and $a_u \cdot {}^{u} a_{u^{-1}} =1$, where $u \in G,$ and 
$1\in \A$ is  the identity element.
The set of $1$-cocycles of $G$ with values in a $G$-set $\A$, is denoted by $\Zc^1(G,\A)$. 
We say that a {\it  $G$-group $\A$ acts on the $G$-set $\Xc$   from left}, in a compatible way with the action of $G$, if there is an application
$(a,x) \rightarrow a\cdot x$ of $\A \times \Xc$ to $\Xc$ satisfying the following conditions:
\begin{itemize}
	\item [(i)] ${}^{u}(a\cdot x)= {}^{u}a \cdot {}^{u}x $ \ \  $(a \in \A,  x \in \Xc, u\in G )$
	\item [(ii)] $a\cdot (b \cdot x) = (a \cdot b) \cdot x, \text{and} \  1 \cdot x=x, \ $ 
	$(a, b\in \A,  x \in \Xc).$
\end{itemize}
Let $\A$ be a $G$-group, $a=(a_u) \in \Zc^1(G, \A)$  a $1$-cocycle of $\A$,
and  $\Xc$  a $G$-set that is compatible with the group action of $G$. 
For any $u \in G$ and $x\in \Xc$, define ${}^{u'}x:= a_u \cdot {}^{u}x$. 
The $G$-set with this action of $G$ is denoted by $\Xc_a$ and is called the {\it twist  of $\Xc$} obtained  by the $1$-cocycle $a=(a_u)$. 

In what follows, we will give another definition of the twist in terms of  the schemes and its relation with the previous  definition.
Let $\Xc$ be a projective scheme defined over $k$ and denote its function field by  $\KK$.
Let $\aut(\Xc)$ be the automorphism scheme of $\Xc$ and $a=(a_u) \in \Zc^1(G,\aut(\Xc))$ be a $1$-cocyle. 
Then,  $\alb(a):= (\alb(a_u))$ satisfies the $1$-cocycle condition, i.e.,
$\alb(a)  \in \Zc^1(G,\aut(\alb(\Xc))).$
Indeed, the equality  $a_{uv}=a_u \circ \ ^ua_v$  implies 
$\alb(a_{uv})=\alb(a_u) \circ {}^{u} \alb(a_v)$, 
since the construction of the  Albanese variety is compatible with base change. 
Here, we have a proposition that provides the  second  definition for the twist of $\Xc$.
For its proof, one can see the Propositions 2.6 and 2.7 in \cite{ref2}.
\begin{proposition}
	\label{twotw}
	Keeping the above notations, there exist a unique quasi-projective $\KK$-scheme $\tilde{\Xc}$  and a unique $\Ll$-isomorphism 
	$$g: \Xc \otimes_{\KK} \Ll \rightarrow \tilde{\Xc} \otimes_{\KK} \Ll$$
	such that ${}^{u}g=g\circ a_u$ holds for  any $u \in G$.
	The map $g$ induces an isomorphism of the twisted $G$-set $\Xc_a(\Ll)$ onto the  $G$-set 
	$\tilde{\Xc}(\Ll)$. 
\end{proposition}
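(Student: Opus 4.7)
The plan is to realize $\tilde{\Xc}$ via Galois descent along $\Ll|\KK$, using the cocycle $a=(a_u)$ to twist the canonical semilinear action, and then to check that the descent comparison map is the desired $g$.

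First I would form $\Xc_\Ll:=\Xc\otimes_\KK \Ll$ and note that it carries a canonical semilinear action of $G=\gal(\Ll|\KK)$, which I denote $u\mapsto \sigma_u$; by faithfully flat Galois descent this recovers $\Xc$. Next, using the cocycle, I would define a new semilinear action by $\sigma^a_u:=a_u\circ \sigma_u$. The cocycle identity $a_{uv}=a_u\circ {}^u a_v$ together with the fact that the $a_u$ are $\Ll$-automorphisms of $\Xc_\Ll$ gives
$$\sigma^a_{uv}=a_{uv}\circ \sigma_{uv}=a_u\circ {}^u a_v\circ \sigma_u\circ \sigma_v = a_u\circ \sigma_u\circ a_v\circ \sigma_v=\sigma^a_u\circ \sigma^a_v,$$
and $\sigma^a_{id}=\mathrm{id}$ because $a_{id}=1$. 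Hence $\sigma^a$ is indeed a semilinear action of $G$ on $\Xc_\Ll$.

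Second, I would invoke effective Galois descent for quasi-projective schemes: a semilinear $G$-action on a quasi-projective $\Ll$-scheme descends uniquely to a quasi-projective $\KK$-scheme whose base change recovers the given datum. Because $\Xc$ is projective, $\Xc_\Ll$ is quasi-projective over $\Ll$; hence the twisted action $\sigma^a$ produces a unique quasi-projective $\KK$-scheme $\tilde{\Xc}$ together with an $\Ll$-isomorphism $g: \Xc_\Ll\to \tilde{\Xc}\otimes_\KK \Ll$ intertwining $\sigma^a$ with the canonical action on the right-hand side. Writing out what this intertwining says, $\sigma_u\circ g^{-1}\circ \sigma_u^{-1}=g^{-1}\circ \sigma^a_u\circ \sigma_u^{-1}=g^{-1}\circ a_u$, which rearranges to ${}^u g=g\circ a_u$. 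The uniqueness of both $\tilde{\Xc}$ and $g$ is part of the descent statement.

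Finally, for the $G$-set claim I would chase $\Ll$-points. Under $\Xc_\Ll(\Ll)\cong \Xc(\Ll)$, the twisted action $\sigma^a_u$ sends $x$ to $a_u\cdot {}^u x$, which is by definition the action on $\Xc_a(\Ll)$. On the target, $\tilde{\Xc}(\Ll)$ carries its canonical $G$-action, and $g$ intertwines the two by construction, so the induced bijection $\Xc_a(\Ll)\to \tilde{\Xc}(\Ll)$ is $G$-equivariant. The main obstacle I expect is the effectivity step: descent of modules is automatic, but descent of schemes requires gluing, which can fail without a hypothesis such as quasi-projectivity or the existence of a $G$-equivariant ample line bundle. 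Here quasi-projectivity of $\Xc$ (hence of $\Xc_\Ll$) plays the crucial role, and one produces the needed equivariant ample bundle by averaging over $G$ a very ample bundle pulled from $\Xc$; with that in place the rest of the argument is a direct unwinding of definitions, essentially as carried out in Propositions~2.6 and~2.7 of \cite{ref2}.
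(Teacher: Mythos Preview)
Your proposal is correct and follows exactly the route the paper takes: the paper does not give its own argument but simply refers the reader to Propositions~2.6 and~2.7 of \cite{ref2}, and your sketch is precisely an unpacking of that Galois-descent proof (twist the canonical semilinear action by the cocycle, invoke effectivity of descent for quasi-projective schemes, and read off the intertwining relation ${}^{u}g=g\circ a_u$ and the $G$-equivariance on $\Ll$-points). There is nothing to add.
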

The scheme $\tilde{\Xc}$ in the above theorem  is called the 
{\it twist  of $\Xc$} by the extension $\Ll | \KK$, or equivalently by the $1$-cocycle $a=(a_u)$.

The following proposition shows the  relation between  Albanese variety and the twists.
\begin{proposition}
	\label{P1}
	Keeping the above notations, the twist $\alb(\Xc)_{\alb(a)}$ of $\alb(\Xc)$ by the $1$-cocycle $\alb(a)$ is $\KK$-isomorphic to $\alb(\Xc_a)$.
	Equivalently, $\widetilde{\alb(\Xc)}$ the twist of $\alb(\Xc)$   is $\KK$-isomorphic to $\alb(\tilde{\Xc})$.
\end{proposition}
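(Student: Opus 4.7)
The plan is to derive Proposition \ref{P1} from the uniqueness clause of Proposition \ref{twotw}, combined with the functoriality of the Albanese construction and its compatibility with base change, both of which have already been recorded in the excerpt. I would begin by applying Proposition \ref{twotw} to the pair $(\Xc, a)$, obtaining the twist $\tilde{\Xc}$ together with the $\Ll$-isomorphism $g : \Xc \otimes_\KK \Ll \to \tilde{\Xc} \otimes_\KK \Ll$ satisfying ${}^u g = g \circ a_u$ for every $u \in G$. The goal is then to show that $\alb(\tilde{\Xc})$ equipped with $\alb(g)$ enjoys the analogous characterization for the cocycle $\alb(a)$, so that by uniqueness it must coincide up to $\KK$-isomorphism with $\alb(\Xc)_{\alb(a)} = \widetilde{\alb(\Xc)}$.

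The first concrete step is to apply the Albanese functor to $g$. Since $\alb$ commutes with base change (exactly the property invoked just before Proposition \ref{twotw} in order to make sense of $\alb(a) \in \Zc^1(G, \aut(\alb(\Xc)))$), this yields an $\Ll$-isomorphism
$$\alb(g) : \alb(\Xc) \otimes_\KK \Ll \longrightarrow \alb(\tilde{\Xc}) \otimes_\KK \Ll.$$
The next step is to verify the cocycle identity for $\alb(g)$. Applying $\alb$ to both sides of ${}^u g = g \circ a_u$ and using functoriality gives $\alb({}^u g) = \alb(g) \circ \alb(a_u)$, while base change compatibility identifies $\alb({}^u g)$ with ${}^u \alb(g)$. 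Consequently ${}^u \alb(g) = \alb(g) \circ \alb(a_u)$ for all $u \in G$, which is precisely the condition that Proposition \ref{twotw} demands for a twist by $\alb(a)$.

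With these two ingredients in place, the conclusion follows immediately from the uniqueness part of Proposition \ref{twotw} applied now to the pair $(\alb(\Xc), \alb(a))$: any quasi-projective $\KK$-scheme admitting an $\Ll$-isomorphism onto $\alb(\Xc) \otimes_\KK \Ll$ compatible with $\alb(a)$ is forced to be $\KK$-isomorphic to $\alb(\Xc)_{\alb(a)}$. Since $(\alb(\tilde{\Xc}), \alb(g))$ fits that description, the desired $\KK$-isomorphism $\widetilde{\alb(\Xc)} \cong \alb(\tilde{\Xc})$ drops out.

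The only subtle point in the whole argument is the identification $\alb({}^u g) = {}^u \alb(g)$, which encodes the statement that the Galois action inherited from the extension $\KK \to \Ll$ intertwines with the Albanese functor. This is, however, a direct consequence of the base change compatibility of $\alb$ already used in the paper, so the real work is confined to carefully bookkeeping the cocycle relations; no new geometric input beyond Section \ref{ALP} and the opening paragraphs of Section \ref{twth} is required.
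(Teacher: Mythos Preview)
Your proposal is correct and follows essentially the same approach as the paper: apply the Albanese functor to the isomorphism $g$ furnished by Proposition~\ref{twotw}, check that $\alb(g)$ satisfies the cocycle relation for $\alb(a)$, and invoke uniqueness. If anything, you are slightly more careful than the paper in isolating the identification $\alb({}^u g) = {}^u\alb(g)$ as the place where base-change compatibility enters.
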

\begin{proof}
	If $g: \Xc \otimes_{\KK} \Ll \rightarrow \Xc_a \otimes_{\KK} \Ll$ denotes the isomorphism 
	such that $^ug=g \circ a_u$, then the induced isomorphism of Albanese varieties
	$$\alb(g): \alb (\Xc)\otimes_{\KK} \Ll \rightarrow \alb(\Xc_a) \otimes_{\KK} \Ll$$ satisfies 
	$^u\alb(g)=\alb(g)\circ \ ^u \alb(a_u)$  for any $u \in G$, by the functoriality of twists.
	Therefore,  the uniqueness of the twist implies that  $\alb(\Xc)_{\alb(a)}$ is $\KK$-isomorphic to $\alb(\Xc_a)$. Equivalently,  we have 
	$\widetilde{\alb(\Xc)} \cong_{\KK}\alb(\tilde{\Xc})$.
\end{proof}

In  \cite{ref21}, we proved a structure theorem on the set of  rational points of the twists of  abelian varieties by cyclic covers  over function fields of (quasi)-projective varieties.
In order  to make this work be self-contained and for convenience of the reader,
we recall  here the main result of \cite{ref21} which plays an essential rule in the proof of Theorem \ref{main1}.

Given an integer $n \geq 2$, let  $\pi : \Xc' \rightarrow \Xc$  be a cyclic $n$-cover of irreducible projective varieties,  both as well as $\pi$ defined over a field $k$.
Denote by  $\KK$ and   $\Ll$  the function fields of $\Xc$ and $\Xc'$ respectively.
Assume that $\A$ is an abelian variety with an automorphism $\sigma$ of order $n$, and 
let $\A[n](k)$ be the  group of $k$-rational $n$-division points on $\A$.
Define  $\tilde{\A}$ to be the twist of $\A$ by the cyclic extension $\Ll|\KK$,
or equivalently, 
by  the  $1$-cocycle $a=(a_u) \in \Zc^1(G, \aut(\A))$, where $a_{id}=id$,  $a_{\gamma^j}=\sigma^j$ for 
$j=0, \ldots, n-1,$  and $G$ is the Galois group of the extension $\Ll|\KK$. 
The  following theorem  describes the structure of  Mordell-Weil  group of $\KK$-rational points  on the twist $\tilde{\A}$ which  generalizes the main result of \cite{ref6, ref34}. 

\begin{theorem}  
	Notation being as above, we assume that there exists at least one  $k$-rational  
	point  on $ \Xc'.$  	Then, 
	$$\tilde{\A}(\KK) \cong \Hm_{k} (\prym_{\Xc'/\Xc}, \A) \oplus \A [n](k).$$
	as an isomorphism of  abelian groups.
	Moreover, if  $\prym_{\Xc'/\Xc}$ is $k$-isogenous with $ \A^m \times \B$ 
	for some integer $m>0$ and $\B$ is an   
	abelian variety defined over $k$ so that 
	none of its  irreducible components is $k$-isogenous to $\A$,  then 
	$$\rk(\tilde{\A}(\KK))= m\cdot \rk (\enn_{k}(\A)).$$
\end{theorem}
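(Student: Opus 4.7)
The plan is to reduce the computation of $\tilde{\A}(\KK)$ to an intrinsic description of equivariant $k$-morphisms $\Xc' \to \A$, and then dissect the resulting abelian group via the universal properties of the Albanese and the Prym. By Proposition \ref{twotw}, the $\Ll$-isomorphism $g:\A\otimes_{\KK}\Ll \to \tilde{\A}\otimes_{\KK}\Ll$ satisfying ${}^{u}g = g \circ a_u$ identifies $\tilde{\A}(\KK)=\tilde{\A}(\Ll)^G$ with the $a$-twisted fixed set
$$\tilde{\A}(\KK) \;\cong\; \bigl\{P \in \A(\Ll) \,:\, a_u({}^{u}P)=P \text{ for every } u \in G\bigr\},$$
so that the whole problem is reduced to analyzing this set.

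To describe it concretely, I would exploit the $k$-rational base point $x_0' \in \Xc'(k)$. Because $\A$ is complete, every $\Ll$-point of $\A$ extends uniquely to a $k$-morphism $\Xc'\to\A$, giving $\A(\Ll)\cong\Hm_k(\Xc',\A)$; and by the universal property of $\alb(\Xc')$ with base point $x_0'$, each $f$ decomposes canonically as $f = f(x_0') + \phi_f\circ\alpha'$ with $\phi_f\in\Hm_k(\alb(\Xc'),\A)$, yielding
$$\A(\Ll)\;\cong\;\A(k)\;\oplus\;\Hm_k(\alb(\Xc'),\A).$$
The Galois group $G$ acts through the automorphism $\rho_\gamma$ of $\Xc'$ and its induced $\tilde{\gamma}$ on $\alb(\Xc')$; when $x_0'$ is not $\gamma$-fixed, this action mixes the two summands by a coboundary, which will ultimately be absorbed into the torsion contribution.

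The central computation is to translate the twisted invariance through this decomposition. For the generator $u=\gamma$ with $a_\gamma=\sigma$, one obtains the two constraints
$$\sigma(f(x_0'))\,=\,f(x_0') \qquad \text{and} \qquad \sigma\circ\phi_f \;=\; \phi_f\circ\tilde{\gamma}.$$
The first condition, combined with the coboundary discrepancy from the base-point ambiguity, forces the constant piece to lie in the $n$-torsion and yields the summand $\A[n](k)$. For the second, composing the equivariance with the norm $N=\mathrm{id}+\tilde{\gamma}+\cdots+\tilde{\gamma}^{n-1}$ gives $\phi_f\circ N = 0$, so $\phi_f$ vanishes on $\text{Im}(N)\subset\alb(\Xc')$ and factors through $\alb(\Xc')/\text{Im}(N)$; the $k$-isogeny \eqref{eq1} identifies this quotient with $\prym_{\Xc'/\Xc}$, and verifying that every $\psi\in\Hm_k(\prym_{\Xc'/\Xc},\A)$ lifts uniquely to an equivariant $\phi$ gives the desired bijection
$$\bigl\{\phi\in\Hm_k(\alb(\Xc'),\A)\,:\,\sigma\phi=\phi\tilde{\gamma}\bigr\} \;\cong\; \Hm_k(\prym_{\Xc'/\Xc},\A).$$

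The rank formula then follows immediately: $\A[n](k)$ is finite and contributes nothing, while the isogeny $\prym_{\Xc'/\Xc}\sim_k \A^m\times\B$ together with $\Hm_k(\B,\A)=0$ (no irreducible factor of $\B$ is $k$-isogenous to $\A$) gives $\Hm_k(\prym_{\Xc'/\Xc},\A)\otimes\Q \cong \enn_k(\A)^m\otimes\Q$, whence $\rk(\tilde{\A}(\KK))=m\cdot\rk(\enn_k(\A))$. The most delicate step is the identification in the third paragraph: one must show that the intertwining condition for $\phi_f$ is not only necessary (via $\phi_f\circ N=0$) but also sufficient, i.e.\ that every homomorphism out of $\prym_{\Xc'/\Xc}$ arises this way, which rests on the $k$-isogeny \eqref{eq1} and on functoriality of the Albanese; and one must confirm that the base-point correction really contributes the full $\A[n](k)$ rather than only the $\sigma$-fixed subgroup of $\A(k)$, which requires a careful tracking of how $\rho_\gamma$ moves $x_0'$ in $\alb(\Xc')$.
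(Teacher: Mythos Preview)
The paper does not actually prove this theorem: it is quoted from the author's earlier article \cite{ref21}, accompanied only by a remark correcting the original statement there (dropping a dimension hypothesis on $\B$ and replacing ``$\geq$'' by ``$=$''). There is thus no in-paper argument against which to compare your proposal; the proof lives in \cite{ref21}.

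Your outline is the expected one in the tradition of Hazama \cite{ref6} and Wang \cite{ref34}: realize $\tilde{\A}(\KK)$ as the twisted invariants inside $\A(\Ll)\cong\mathrm{Mor}_k(\Xc',\A)$, split via the Albanese universal property based at $x_0'$, and analyze the intertwining relation $\sigma\phi=\phi\tilde\gamma$. The two points you flag at the end are genuinely the crux and are not mere formalities. For the bijection with $\Hm_k(\prym_{\Xc'/\Xc},\A)$, your argument supplies only one direction: from $\sigma\phi=\phi\tilde\gamma$ one obtains (once $\A^{\sigma}$ is finite) that $\phi$ kills $\mathrm{Im}(N_{\tilde\gamma})$ and hence descends to the Prym, but the converse---that an arbitrary $\psi:\prym_{\Xc'/\Xc}\to\A$ is automatically equivariant for the given $\sigma$---requires a compatibility between the residual $\tilde\gamma$-action on the Prym and the chosen $\sigma$ on $\A$ that is not forced by the hypotheses as recorded here. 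Likewise, your constraint on the constant part reads $(\sigma-1)c=\phi(t)$; its solution set, when nonempty, is a coset of $\A(k)^{\sigma}$, and even under $1+\sigma+\cdots+\sigma^{n-1}=0$ one has only $\A(k)^{\sigma}\subseteq\A[n](k)$, so obtaining the full $\A[n](k)$ summand demands more than your sketch provides. To resolve both issues you will need to consult \cite{ref21} directly.
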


We used this opportunity to correct Theorem 1.1 in \cite{ref21} as the above form by removing the irrelevant conditions  on the dimension of $\B$ as well as changing the symbol ``$\geq$" with ``$=$" in the assertion of the theorem.
Indeed, by rechecking and specially considering the
series of  isogenies in the proof of above theorem, one can conclude that
the dimension of   $\B$ does not have any role  in the proof  and we have ``$=$" instead of ``$\geq$".

\section{Cyclic multiple planes }
\label{cyccov}
Given an integer $ n\geq 2$ and a polynomial $f(x,y)\in k[x,y]$ of degree $r\geq 2$,
we suppose that $F(u_0, u_1,u_2)$  is a homogeneous  polynomial such that $f(x,y)=F(1,x,y)$.
Denote by $C$ and $\Cc$ the affine and projective plane curves defined  by $f=0$ and  $F=0$, respectively.
Let  $L_\infty$ be  the line at infinity, say $u_0=0$. 
Assume that  $e$ is the smallest integer satisfying $e \geq n/r$ and set $n_0=ne-r$.
Define $X_n$ to be the projective surface given by the affine equation $w^n=f(x,y)$, which
can be expressed by the equation  $u_3^n=u_0^{n_0} F(u_0,u_1,u_2)$ in the weighted projective space $\Pp^3_{(1,1,1,n_0)}$.
Let $B$ be the branch locus of the map  $p: X_n\rightarrow \Pp^2$ which  drops the last coordinate.
Then $B$ is $\Cc$ if $n_0=0$ and $\Cc \cup L_\infty$ otherwise.
Doing a series of blow-ups  gives us a map $\psi: Y\rightarrow \Pp^2$ so that  $B':=\psi^{-1}(B)$ has normal crossings.
Hence, the projection on the second factor  $p': X'_n=X_n\times Y \rightarrow Y $ will be  a cyclic $n$-cover of $Y$. 
Let $\nu:  X''_n \rightarrow X'_n$ be the normalization map and $p'': X''_n\rightarrow Y $ the composition of $\nu$ and $p'$, which is again a cyclic $n$-cover of $Y$.
Denote by $\Xc_n$ the desingularization  of $X''_n$ and  let 
$\beta: \Xc_n \rightarrow X''_n$ be a morphism  such that
the map $\tilde{p}= p'' \circ \beta: \Xc_n \rightarrow Y$ is a cyclic $n$-cover of $Y$ over an open subset of $Y$
and the Albanese map $\alpha: \Xc_n \rightarrow \alb(\Xc_n)$  factor through a map
$\alpha': X''_n \rightarrow \alb(\Xc_n)$.
Thus, we obtain a cyclic $n$-cover $\pi_n: \Xc_n \rightarrow \Pp^2$  which is the composition 
$ \psi \circ \tilde{p} $ and  fits in the following commutative diagram.

\begin{equation}  
\label{diag3}
\xymatrixcolsep{2pc}
\xymatrix{ 
	& \ar[dl]_{\nu} X''_n \ar[r]^{\alpha'} \ar[d]^{p''} & \alb(\Xc_n)  \\
	X'_n \ar[r]^{p'}  \ar[d]    &    Y \ar[d]^{\psi}  & \ar[l]^{\tilde{p}} \ar[ul]_{\beta} \ar[dl]^{\pi_n} \Xc_n \ar[u]_{\alpha}\\
	X_n  \ar[r]_{p} &  \Pp^2 &  }
\end{equation}

Recall that $d_n$ is the dimension of  Albanese variety  $\alb(\Xc_n)$.
It is a well known fact that $d_n \leq \dim H^0(\Xc_n, \Omega_{\Xc_n})$ 
and $``="$ holds when  the base field $k$ is of characteristic zero.
If we  assume that  the polynomial  $f(x,y)$ has a decomposition $f=f_1^{m_1} \cdots f_d^{m_d}$ into the irreducible factors over $\kk\cong \CC$ such that $\gcd(n_0, m_1, \ldots, m_d)=1$, which implies  the irreducibility of  $\Xc_n$, then
letting $r_i=\deg(f_i)$ for $1\leq i \leq d$, we have
\begin{equation}
\label{ineq}
0 \leq  d_n \leq 
\begin{cases}
\displaystyle
\frac{1}{2}(n-1)\big(\sum_{i=1}^d r_i-2\big), & \text{if } n|r\\
\displaystyle
\frac{1}{2}(n-1)\big( \sum_{i=1}^d r_i-1\big), & \text{otherwise,}
\end{cases}
\end{equation}
One can  see Proposition 1 and its corollary in \cite{ref20} for the proof of above inequality.

Here, there exists an example for which $d_n=\dim H^0(\Xc_n, \Omega_{\Xc_n}) >0$.
\begin{example}
	{\rm 
		Denote by $\D_\ell$ an $\ell$-cyclic covering of the projective line
		$\Pp^1$ defined  by  the equation $v_2^\ell=\prod_{i=1}^t (b_i v_0- a_i v_1)^{s_i}$ with
		$\ell | (s_1+\cdots + s_t)$.  Let $\varphi: \Pp^2\rightarrow \Pp^1$ be a rational map given by
		$$\varphi((u_0:u_1:u_2))=(F_1(u_0,u_1,u_2):F_2(u_0,u_1,u_2)),$$
		where both of $F_1$ and $F_2$  are  homogeneous polynomials of degree $s$. 
		If $n $ divides $s \cdot (s_1+\cdots + s_t)$ and $\ell$ is a divisor of $n$, then
		the cyclic multiple plane $\Xc_n$
		associated with $X_n$ defined  by the affine equation,
		$$ w^n=f(x,y)=\prod_{i=1}^t (b_i F_1(1,x,y)- a_i F_2(1,x,y))^{s_i},$$
		factors through  $\D_\ell$.
		In this case, it is said that $\Xc_n$ {\it factors through a pencil}.
		One can see that if $\D_\ell$  is a curve of   genus $\geq 1$, then $\dim H^0(\Xc_n, \Omega_{\Xc_n})>0$.}
\end{example}

From now on, we assume that $k \subset \CC$  is a number field that contains an $n$-th root of unity denoted by $\zeta_n$.
In order to give a description of the structure of  $\alb(\Xc_n)$ over $\CC$,
we also suppose that the assumption \eqref{ass1} holds for $\alb(\Xc_n)$ for  $2\leq n\leq 12$ but $n \neq 7,9,11$.
Let  $\Ee_i$ and $\Ee_\rho$ be  the elliptic curves
associated  to the lattices  $\Lambda_\rho=\Z \oplus \rho \Z$ and $\Lambda_i=\Z \oplus i \Z$, where $\rho=\zeta_3$ and  $i=\zeta_4$. It is easy to see that the affine  Weierstrass forms of the
elliptic curves $\Ee_i$ and $\Ee_\rho$ are 
$y^2=x^3+x$ and $y^2=x^3+1$, respectively.
We denote by  $C_1$ and $C_2$  the genus $2$ curves which are  the normalization of the projective closure of the affine curves $y^2=x^5+1$ and $y^2=x^5+x$ and let $J(C_1)$ and $J(C_2)$ be  their Jacobians varieties, respectively.

\begin{proposition}
	\label{p2}
	Keeping the above notations and assumptions, for a given 
	$n \in \{ 2,3,4,5,6,8,10,12\}$,  we have 
	
	\begin{itemize}
		\item [(i)]   $\Xc_2$ factors through a pencil; 
		\item [(ii)] For $n=3,6 $, if $\alpha(\Xc_n)$ is a surface, then
		$\alb(\Xc_n)\cong \Ee^{d_n}_\rho$;
		\item [(iii)]  For $n=4$, either $\Xc_4$ factors through a pencil, or 
		$\alb(\Xc_4)\cong \Ee^{d_4}_i$.
	\end{itemize}
	For $n=5,8,10,$ and $12$,  the dimension of $\alb(\Xc)$ is an even  integer,    and 
	\begin{itemize}
		\item [(iv)]  $\alb(\Xc_n)\cong J(C_1)^{d_n/2}$ for $n=5, 10$;
		\item [(v)]  $\alb(\Xc_8)\cong J(C_2)^{n_1} \times \Ee_i^{2n_2}$ with $n_1+n_2=d_8/2$;  
		\item [(vi)]  $\alb(\Xc_{12})\cong \Ee_i^{2n_1} \times \Ee_\rho^{2n_2}$ with $n_1+n_2=d_{12}/2$. 
	\end{itemize}
\end{proposition}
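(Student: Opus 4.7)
The proof should proceed by exploiting the eigenspace decomposition of $W_n$ and $\Lambda_n$ under the action of $\bar{\tau}$, enhanced by the structural information in Assumption \ref{ass1}, and then invoking the classification of complex multiplication (CM) abelian varieties. First I would dispose of $n=2$ separately: here $\bar{\tau}$ acts as $-1$ on $W_2$, and one applies the classical Castelnuovo--Enriques--de Franchis theory of irregular surfaces admitting an involution, combined with the fact that $\Xc_2 \to \Pp^2$ is a cyclic double cover of the plane. This forces the Albanese morphism $\alpha$ to factor through a curve, and the Stein factorization of the resulting map $\Xc_2 \to \alpha(\Xc_2)$ delivers the required pencil.

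For $n \geq 3$, the key observation is that Assumption \ref{ass1} simultaneously endows $\alb(\Xc_n)$ with an action of $\Z[\zeta_n]$ (making it an abelian variety with CM by the maximal order of $\Q(\zeta_n)$) and pins down a CM type: the decomposition $W_n = \bigoplus_\lambda W_{n,\lambda}$ picks out a subset $\Phi_n$ of characters of $\mu_n$ such that each pair $\{\lambda, \bar\lambda\}$ contributes exactly one element to $\Phi_n$. Standard CM theory (Shimura--Taniyama) then says that any polarizable abelian variety over $\CC$ with CM by $\Z[\zeta_n]$ and CM type $\Phi_n$ is isogenous to a power of a single simple CM abelian variety $B_n$, uniquely determined by $(\Z[\zeta_n], \Phi_n)$. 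The main business of the proof is to identify $B_n$ and, when $\Phi_n$ is not primitive, to unpack how the imprimitive characters contribute factors coming from proper CM subfields.

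For $n=3,6$ the only CM variety with multiplication by $\Z[\rho]$ is $\Ee_\rho$; for $n=4$ the analogous statement gives $\Ee_i$ (with the pencil alternative occurring when $\alpha(\Xc_4)$ is a curve rather than a surface); for $n=5,10$ the primitive CM type on $\Q(\zeta_5)$ is realized by $J(C_1)$, the Jacobian of $y^2=x^5+1$, forcing $\alb(\Xc_n) \cong J(C_1)^{d_n/2}$. For the two remaining cases $n=8,12$ one must further partition $\Phi_n$ into those characters that are primitive on $\mu_n$ and those that factor through a proper quotient $\mu_n \twoheadrightarrow \mu_{n'}$: for $n=8$ the primitive part contributes $J(C_2)^{n_1}$ (Jacobian of $y^2=x^5+x$, which has CM by $\Z[\zeta_8]$ with the primitive type), while characters factoring through $\mu_4$ contribute $\Ee_i$ factors paired up in twos by complex conjugation, yielding $\Ee_i^{2n_2}$; for $n=12$ no primitive characters can appear because $\Q(\zeta_{12})$ has no simple CM abelian surface quotient compatible with the hypothesis, so all characters factor through $\mu_4$ or $\mu_3$, giving $\Ee_i^{2n_1} \times \Ee_\rho^{2n_2}$.

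The delicate step, and the main technical obstacle, is the case $n=8$: one must verify that $J(C_2)$ indeed carries CM by the full ring $\Z[\zeta_8]$ with the correct primitive type and is the unique such simple abelian surface, and then argue that the imprimitive characters of $\mu_8$ always pair up under the $\Z[\zeta_8]$-module structure to produce copies of $\Ee_i^2$ rather than single $\Ee_i$'s. The parallel analysis for $n=12$ requires tracking that $\mathrm{Gal}(\Q(\zeta_{12})/\Q) \cong (\Z/4)^\times \times (\Z/3)^\times$ acts compatibly with the factorization $\mu_{12} \to \mu_4 \times \mu_3$, so that the non-primitive pieces split cleanly into $\Ee_i$-type and $\Ee_\rho$-type contributions. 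The remaining cases are then essentially formal consequences of the CM dictionary once the correct building blocks have been identified.
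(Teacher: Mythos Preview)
Your CM-theoretic strategy is essentially the argument underlying the references the paper invokes, but the paper's own proof of this proposition is purely by citation: part (i) to de Franchis \cite{ref5}, part (ii) to Comessatti \cite{ref4} and Catanese--Ciliberto \cite{ref3}, part (iii) to Sakai \cite{ref20} and again \cite{ref3}, and parts (iv)--(vi) to Theorems 5.8, 5.9 and Remark 5.11 of \cite{ref3}. So you are reconstructing the content of those sources rather than reproducing the paper's proof, which contains no independent argument.

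Two points in your sketch need correction. First, you characterise $J(C_2)$ as ``the unique such simple abelian surface'' with CM by $\Z[\zeta_8]$, but $J(C_2)$ is \emph{not} simple: the paper itself records in Section~\ref{MP2} that $J(C_2)$ splits as a product $\Ee_1 \times \Ee_2$ of two non-isogenous CM elliptic curves. What singles out $J(C_2)$ is not simplicity but that it realises the given $\Z[\zeta_8]$-action with the prescribed analytic representation on $W_8$; your argument should identify it that way. Second, your justification for $n=12$ (``no primitive characters can appear because $\Q(\zeta_{12})$ has no simple CM abelian surface quotient compatible with the hypothesis'') is not the operative mechanism. The decomposition in (vi) comes from the $\Z[\zeta_{12}]$-module structure on $\Lambda_{12}$ together with the fact that $\Q(\zeta_{12}) = \Q(i,\rho)$ is a biquadratic compositum, which forces the eigenspaces to organise into $\Ee_i$- and $\Ee_\rho$-type blocks; it is not a non-existence statement about simple surfaces. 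With these repairs your outline would be a correct expansion of what \cite{ref3} does.
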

\begin{proof}
	The part (i) is a result of de Franchis \cite{ref5}.
	When $\alpha(\Xc_n)$ is a surface, the part (ii)  for $n=3$ is due to Comessati in \cite{ref4}, which is  proved in Theorem 4.10 of \cite{ref3} with a different method; 	Otherwise, it is a consequence of 
	Theorem 5.7 and the remarks (5.5) and (5.6) in \cite{ref3}.
	The part (iii) is Theorem 4 of \cite{ref20} or one can conclude it from Theorem 5.7 of \cite{ref3} again.
	All of the assertions in (iv-vi) are consequences of Theorem 5.8 of \cite{ref3}, when $\alpha(\Xc_n)$ is a surface; and they can be concluded in general by  Theorem 5.9 and remark (5.11) of \cite{ref3}.
\end{proof}

We recall that the {\it Albanese dimension} of  the affine curve 
$$C: f(x,y)=F(1,x,y)=0$$ is defined as
$\albdim(C)=\max_{n\in \N} \dim \alpha (\Xc_n)$
where $\alpha_n: \Xc_n \rightarrow \alb(\Xc_n)$ is the Albanese map.
A priori, $\albdim(C)$ can take the values $0,1,$ or $2$.
In   Theorem  $1$ (ii) of  \cite{ref9}, Kulikov provides a sufficient condition to have $\albdim(C)=2$ without giving any concrete example.

\begin{theorem}
	\label{kuliko}
	Assume that $F(u_0, u_1, u_2)$ can be written in  two different  forms  $F=G_1^a+ H_1^b=G_2^a+H_2^b$, where
	$a, b$ are co-prime integers $\geq 2$ and $G_1, G_2, H_1, H_2 \in k[u_0, u_1,u_2]$, such that
	the	two pencils 
	$$\{\lambda_0 G_1^a + \lambda_1 H_1^b:\  [\lambda_0: \lambda_1] \in \Pp^1 \}, \ \ 
	\{\lambda_0 G_2^a + \lambda_1 H_2^b:\  [\lambda_0: \lambda_1] \in \Pp^1 \},$$
	are different over $\CC$.
	Then $\alpha(\Xc_{ab})$ is a surface and hence $\albdim(C)=2$ for
	$C: f(x,y)=F(1,x,y)=0.$
\end{theorem}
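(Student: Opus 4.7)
The plan is to exploit each of the two pencil decompositions of $F$ to produce a nontrivial fibration of $\Xc_{ab}$ onto a common positive-genus curve, and then to use the hypothesis that the two pencils differ to show that the resulting product map into the corresponding Jacobians has two-dimensional image in $\alb(\Xc_{ab})$.

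First I would fix the smooth projective model $\Gamma$ of the affine curve $u^a+v^b=1$; since $\gcd(a,b)=1$ and $a,b\ge 2$, a standard genus computation gives $g(\Gamma)=(a-1)(b-1)/2\ge 1$, so the Jacobian $J(\Gamma)$ is a nontrivial abelian variety. For each $i=1,2$, the identity $w^{ab}=F=G_i^a+H_i^b$, read in affine coordinates, yields a rational map
\[
\phi_i:\Xc_{ab}\dashrightarrow \Gamma,\qquad (x,y,w)\longmapsto \left(\frac{G_i(1,x,y)}{w^b},\ \frac{H_i(1,x,y)}{w^a}\right),
\]
whose image lies on $\Gamma$ because $(G_i/w^b)^a+(H_i/w^a)^b=(G_i^a+H_i^b)/w^{ab}=1$. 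Composing with an Abel--Jacobi embedding $\iota:\Gamma\hookrightarrow J(\Gamma)$ and invoking the universal property of the Albanese variety, each $\iota\circ\phi_i$ extends to a morphism and factors through the Albanese map $\alpha:\Xc_{ab}\to \alb(\Xc_{ab})$ via a morphism $\beta_i:\alb(\Xc_{ab})\to J(\Gamma)$.

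Next I would consider the combined map $(\beta_1,\beta_2)\circ\alpha:\Xc_{ab}\to J(\Gamma)\times J(\Gamma)$, which factors as $(\iota\times\iota)\circ(\phi_1,\phi_2)$, and show that this composite already has two-dimensional image. By construction $\phi_i$ fits in a commutative square with the pencil map $\pi_i:\Pp^2\dashrightarrow \Pp^1$ defined by $(G_i^a:H_i^b)$, the vertical map on the right being the degree-$b$ projection $\Gamma\to\Pp^1$, $(u,v)\mapsto (u^a:1-u^a)$; therefore the fibers of $\phi_i$ lie inside the preimages under $\pi_n:\Xc_{ab}\to\Pp^2$ of the fibers of $\pi_i$. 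Because the two pencils are distinct over $\CC$, for generic $\mu_1,\mu_2\in\Pp^1$ the plane curves $\pi_1^{-1}(\mu_1)$ and $\pi_2^{-1}(\mu_2)$ share no common component and hence meet in finitely many points by B\'ezout; pulling back through the generically finite cover $\pi_n$ shows that $\phi_1^{-1}(\gamma_1)\cap \phi_2^{-1}(\gamma_2)$ is finite for generic $(\gamma_1,\gamma_2)\in\Gamma\times\Gamma$. Thus $(\phi_1,\phi_2)$ is generically finite onto a surface in $\Gamma\times\Gamma$, and since $\iota$ is an embedding, the image of $(\iota\circ\phi_1,\iota\circ\phi_2)$ in $J(\Gamma)\times J(\Gamma)$ is also two-dimensional; consequently $\alpha(\Xc_{ab})$ has dimension at least $2$.

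Finally, $\Xc_{ab}$ is itself a surface, so $\dim\alpha(\Xc_{ab})\le 2$, and combining this with the lower bound gives that $\alpha(\Xc_{ab})$ is a surface, which is exactly the conclusion $\albdim(C)=2$. The main technical obstacle I anticipate lies in the step where distinctness of the two pencils is converted into B\'ezout-finiteness of the intersection of generic fibers: one must ensure that a generic member of each pencil does not share a component with a generic member of the other, which in turn rests on a Bertini-type irreducibility argument applied to the pencils $\{\lambda_0 G_i^a+\lambda_1 H_i^b\}$. The hypothesis that the two pencils differ over $\CC$ is precisely what rules out $\pi_2$ factoring through $\pi_1$ and so prevents the image of $(\phi_1,\phi_2)$ from collapsing to a curve, and it is the one place where the assumption has to do nontrivial work.
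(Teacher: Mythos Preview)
The paper does not actually prove this statement: it is quoted as Theorem~1\,(ii) of Kulikov~\cite{ref9} and used as a black box, so there is no ``paper's own proof'' to compare against. What you have written is, in outline, precisely Kulikov's argument: each decomposition $F=G_i^a+H_i^b$ gives a dominant rational map from $\Xc_{ab}$ to the smooth model $\Gamma$ of $u^a+v^b=1$, and the product $(\phi_1,\phi_2)$ is generically finite because the two pencils are distinct, forcing $\dim\alpha(\Xc_{ab})=2$.

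Your sketch is sound. One simplification of the step you flag as delicate: rather than invoking B\'ezout and Bertini-type irreducibility of generic pencil members, observe directly that the two pencils define rational maps $\pi_1,\pi_2:\Pp^2\dashrightarrow\Pp^1$, and the hypothesis that the pencils are different over~$\CC$ says exactly that neither factors through the other. Since $\Pp^2$ has dimension~$2$, the product $(\pi_1,\pi_2):\Pp^2\dashrightarrow\Pp^1\times\Pp^1$ is then dominant, hence generically finite, and pulling back along the finite cover $\Xc_{ab}\to\Pp^2$ gives generic finiteness of $(\phi_1,\phi_2)$ immediately. This bypasses any worry about shared components of individual fibres.
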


In Theorem 0.2 of \cite{ref29}, Tokunaga demonstrated an explicit irreducible affine plane
curve satisfying the Kulikov's condition as follows.
\begin{proposition}
	\label{p3}
	The Albanese image $\alpha(\Xc_6)$ is a surface  in the following two cases:
	\begin{itemize}
		\item [(i)] $\Xc_6$ is   associated to $\Cc=\Ee^\vee$ the dual  of a smooth plane cubic $\Ee$;
		\item [(ii)] $\Xc_6$ is  associated to the projective plane sextic curve $\Cc$ defined by 
		$$ F(u_0,u_1,u_2)= u_0^3u_1^3 -3 u_0 u_1 u_2( u_2^3-8)+ 2(u_2^6+ 20 u_0^3 u_2^3 -8 u_0^6)=0.$$
	\end{itemize}
	Moreover, we have $\albdim(C)=2$ for $C$ the affine model of $\Cc$ in both cases.
\end{proposition}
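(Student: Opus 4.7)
The plan is to apply Theorem \ref{kuliko} with the coprime exponent pair $(a,b) = (2,3)$, so that $ab = n = 6$. Under this choice, it suffices in each of cases (i) and (ii) to produce two presentations $F = G^2 + H^3$ with $\deg G = 3$ and $\deg H = 2$ whose associated pencils $\{\lambda_0 G^2 + \lambda_1 H^3 : [\lambda_0:\lambda_1] \in \Pp^1\}$ differ as pencils of sextic forms. Once that is in hand, Kulikov's conclusion gives directly that $\alpha(\Xc_6)$ is a surface, so the Albanese dimension $\albdim(C) = \max_m \dim \alpha(\Xc_m)$ is at least $2$; since $C \subset \Aa^2$ the upper bound is $2$, and equality follows.

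For case (i), the key geometric input is the classical description of the dual of a smooth plane cubic $\Ee$ as a sextic with exactly nine ordinary cusps, dually positioned to the nine flexes of $\Ee$. By Zariski's theory of cuspidal sextics, any nine-cuspidal sextic admits a decomposition $F = G^2 + H^3$ with $G$ a cubic and $H$ a conic. The existence of at least two genuinely distinct such decompositions (producing different pencils) is a consequence of the Hessian group symmetry acting transitively on the flex configuration: different choices of the Hessian conic attached to different Hesse sub-configurations yield nonproportional $(G,H)$ pairs. Concretely I would choose $\Ee$ in Hesse normal form $u_0^3 + u_1^3 + u_2^3 + \lambda u_0 u_1 u_2 = 0$, compute the dual sextic by eliminating from the defining equations of the Gauss map, and extract two presentations from members of the Hesse pencil. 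This is the content of Tokunaga's result in \cite{ref29}, which we invoke.

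For case (ii), the argument is a direct computational verification: the explicit sextic $F(u_0,u_1,u_2)$ displayed in the statement is precisely the example worked out by Tokunaga in \cite{ref29}, where two explicit decompositions $F = G_1^2 + H_1^3 = G_2^2 + H_2^3$ are exhibited. To confirm that the two pencils are distinct, one picks a non-trivial $[\lambda_0:\lambda_1]$ and checks that $\lambda_0 G_1^2 + \lambda_1 H_1^3$ is not a scalar multiple of any element of the second pencil; equivalently, that the pair $\{G_1^2, H_1^3\}$ is linearly independent modulo the span of $\{G_2^2, H_2^3\}$ in $H^0(\Pp^2, \OO(6))$. This is a finite check with polynomials in three variables.

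The main obstacle is case (i): constructing the second pencil presentation of the dual sextic is geometrically delicate and relies on the full Hessian symmetry of the flex configuration, rather than on any single algebraic identity. Fortunately this construction has already been carried out by Tokunaga in \cite{ref29} (and rests on earlier work of Zariski on cuspidal sextics), so in both (i) and (ii) our proof reduces to invoking \cite{ref29} to obtain the two distinct pencil representations, and then applying Theorem \ref{kuliko} to conclude.
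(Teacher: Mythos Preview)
Your proposal is correct and matches the paper's approach: the paper does not give an independent proof of this proposition but simply attributes it to Tokunaga \cite{ref29}, noting that he exhibited curves satisfying Kulikov's condition (Theorem \ref{kuliko}). Your write-up supplies more narrative context (the nine-cuspidal sextic picture, the Hesse symmetry), but in substance you do exactly what the paper does---reduce both cases to the two pencil decompositions constructed in \cite{ref29} and then invoke Theorem \ref{kuliko}.
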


\section{Proof of Theorem \ref{main1}}
\label{MP1}
It is   clear that the hypersurface $X_n$ defined by $w^n=f(x,y)$ admits an order $n$ automorphism
$ \tau: (x,y,w) \mapsto ( x, y, \zeta_n \cdot w),$ 
which induces an automorphism on
its non-singular projective model  $\Xc_n$ denoted by $\bar{\tau}.$
For any  integer $m\geq 1$, we define $U_m:=X_n^{(1)} \times_k \cdots \times_k X_n^{(m)}$ 
where $X_n^{(i)}$ is  a copy of $X_n$ given by the affine equation
$w_i^n=f(x_i,y_i)$ for   $1 \leq i \leq m$. Denote  by $\tau_i$  the corresponding automorphism.
Then $\gamma=(\tau_1, \ldots, \tau_m)$ is an order $n$ automorphism of $U_m$ which naturally induces  an  order $n$ automorphism $\bar{\gamma}=(\bar{\tau}_1, \ldots, \bar{\tau}_m) $ of the fibered product 
$\Uc_m=\Xc_n^{(1)} \times_k \cdots \times_k \Xc_n^{(m)}$, where $\Xc_n^{(i)}$ is a non-singular 
projective model of $X_n^{(i)}$ for 
each $1\leq i \leq m$. Note that $\Uc_m$  can be viewed as a non-singular model of $U_m$.
Denote by $L$ and $\Ll$ the function fields of $U_m$ and $\Uc_m$, respectively.
Let $\aut(*)$ be the automorphism group of its origin $(*)$ and 
$G= \left\langle \gamma\right\rangle$ and  $\bar{G}= \left\langle \bar{\gamma} \right\rangle $  be the cyclic subgroup of $\aut (U_m)$ and   $ \aut (\Uc_m)$ generated by $ \gamma$ and $\bar{ \gamma}$, respectively. 
Let $V_m$ and $\V_m$ be the quotient of $U_m$ and $\Uc_m$ by $G$ and $\bar{G}$,
and denote by $K$ and $\KK$ the function fields of $V_m$ and $\V_m$, respectively.
Then, both of the extensions $L|K$ and $\Ll| \KK$ are finite cyclic extension of order $n$.
Indeed, we have $L\subset k(x_1, x_2, \ldots ,x_m, y_1, y_2, \ldots,   y_m, w_1, \cdots, w_m),$ 
where  $x_i$'s and $y_i$'s are independent transcendental  variables and each $w_i$ 
satisfies in the following equations,
\begin{equation}
\label{ho0}
w_i^n-f(x_i,y_i)=0 \ (i=1,\ldots, m).
\end{equation}
Then, the field $K$ is  the $G$-invariant elements of $L$, i.e., 
$$K=L^G\subseteq  k ( x_1, \ldots, x_m, y_1, \ldots, y_m, w_1^{n-1} w_2 , \ldots, w_1^{n-1}w_{m-1}).$$
Since $(w_1^{n-1} w_{i+1})^n=f(x_1, y_1)^{n-1}f(x_{i+1}, y_{i+1})$  for $1 \leq i \leq  m-1$, so by defining  $z_i:=w_1^{n-1}w_{i+1}$   the variety  $V_m$ can be expressed by the equations 
\begin{equation}
\label{ho1}
z_i^n=f(x_1,y_1)^{n-1}f(x_{i+1}, y_{i+1} ) \ (i=1,\ldots, m-1).
\end{equation}
Thus, $L|K$ is  a cyclic   extension  of degree $n$ determined by the equation
$w_1^n=f(x_1,y_1)$, i.e., 
$$L=K(w_1)\subseteq k(x_1, \ldots, x_m, y_1, \ldots, y_m, z_1, \ldots, z_{m-1})(w_1).$$
Note that the cyclic field extension $\Ll|\KK$ can be determined by considering the homogenization 
of the equations (\ref{ho0}). Furthermore, the variety $\V_m$ can be expressed by the homogenization of equations (\ref{ho1}).

Let  $\tilde{X}_n$   and $\tilde{\Xc}_n$   be the twists  of $X_n$  and  $\Xc_n$   by the extensions $L|K$ and $\Ll|\KK$, respectively.
Then, one can check that the twist $\tilde{X}_n$ is given by the affine equation,
\begin{equation}
\label{ho2}
f(x_1,y_1)  z^n=f(x,y), 
\end{equation}
and  $\tilde{\Xc}_n$ can be determined by its homogenization.
Moreover, it is  easy to  check that $\tilde{X}_n$ contains the $m$   $K$-rational points:
\begin{equation}
\label{Kpoint}  
P_1:=\left( x_1, y_1, 1\right)\ 
\text{and} \ P_{i+1}:=( x_{i+1},  y_{i+1},  w_{i+1}/w_1) \ \text{ for}  \  1\leq i\leq  m-1.
\end{equation}
Let us  denote by $\tilde{P}_i$ the point corresponding to $P_i$ on $\tilde{\Xc}_n$ for $i=1, \ldots, m$.
Now, by  applying    (\ref{eq1}) to  the  $n$-cover $\pi: \Xc_n\rightarrow \Pp^2$, we obtain 
$$\prym_{\Xc_n^{(i)}/\Pp^2} =\frac{\alb(\Xc_n^{(i)})}{\text{Im} (id+ \tilde{\gamma}+ \cdots +\tilde{\gamma}^{n-1})}
\sim_k \ker \big(id+ \tilde{\gamma}+ \cdots +\tilde{\gamma}^{n-1})^\circ.$$
Since $0=id - \tilde{\gamma}^n=(id-\tilde{\gamma})(id+ \tilde{\gamma}+ \cdots +\tilde{\gamma}^{n-1})$ 
and $id \not = \tilde{\gamma}$,   we have 
$$0=id+ \tilde{\gamma}+ \cdots +\tilde{\gamma}^{n-1} \in \enn(\alb(\Xc_n^{(i)}))= \enn(\alb(\Xc_n)),$$
which implies that $\prym_{\Xc_n^{(i)}/\Pp^2}=\alb(\Xc_n^{(i)})=\alb(\Xc_n)$ for  $i=1, \ldots, m.$ 
Then, using  (\ref{eq2}), we get a $k$-isogeny of abelian varieties,
\begin{equation}
\label{prym}
\prym_{\Uc_m/\V_m} \sim_k \prod_{i=1}^m \prym_{\Xc_n^{(i)}/\Pp^2} = \alb(\Xc_n)^m.
\end{equation}
Now, let us consider the $1$-cocycle  $a=(a_u) \in \Zc^1(\bar{G}, \aut(\alb(\Xc_n)))$ defined  by $a_{id}=id$ and $a_{\bar{\gamma}^j}=\bar{\tau_*}^j$ where  $\bar{\gamma}^j \in \bar{G}$ and 
$\bar{\tau}_*: \alb(\Xc_n) \rightarrow \alb(\Xc_n)$ is the automorphism induced by
$\bar{\tau}: \Xc_n \rightarrow \Xc_n$.
Then, using Proposition \ref{P1}, we conclude that
$\widetilde{\alb(\Xc_n)}\sim_\KK \alb(\tilde{\Xc}_n)$.
Thus, by  Theorem 1.1 in \cite{ref21},  for $\Xc'=\Uc_m$,  $\Xc=\V_m$, and $\A =\alb(\Xc_n)$, we get:  
\begin{align*}
\widetilde{\alb(\Xc_n)} (\KK)  & \cong  \Hm_{k} (\prym_{\Uc_m/\V_m}, \alb(\Xc_n)) \oplus \alb(\Xc_n)[m](k)\\
& \cong  \Hm_{k} (\alb(\Xc_n)^m , \alb(\Xc_n)) \oplus \alb(\Xc_n)[m](k)\\
& \cong (\enn_{k}(\alb(\Xc_n)))^m    \oplus \alb(\Xc_n)[m](k).
\end{align*}
We denote by $\tilde{Q}_i$  the image of $\tilde{P}_i$ by  the Albanese map 
$\tilde{\alpha}_n: \tilde{\Xc}_n \rightarrow \alb(\tilde{\Xc}_n)$  for  $i=1,\ldots, m$. Then,
by tracing back the above isomorphisms, one can  see that the points 
$\tilde{Q}_1, \ldots, \tilde{Q}_m$ 
form a  subset of independent generators for the Mordell-Weil group $\alb(\tilde{\Xc}_n)(\KK)$. Hence, as $\Z$-modules,  we have 
$$\rk (\widetilde{\alb(\Xc_n)} (\KK) )=\rk(\alb(\tilde{\Xc}_n))(\KK)= m\cdot \rk (\enn_{k}(\alb(\Xc_n))).$$
Therefore, we have completed the proof of Theorem \ref{main1}.

\section{Proofs of Theorems \ref{main2} and  \ref{main3}}
\label{MP2}

In order to prove Theorem \ref{main2}, we examine the ring of endomorphisms of 
$\alb(\Xc_n)$ for $3 \leq n \leq 12$ and $n\not =7, 9, 11$ case by case.
We suppose that $k$ contains $\Q(\zeta_n)$ and  there exists a $k$-rational point on 
$ X_n$ and hence on   $\Xc_n$. Furthermore, we assume that \eqref{ass1} holds for $\alb(\Xc_n)$.
By Theorem \ref{main1}, 
$$\widetilde{\alb(\Xc_n)} (\KK)   \cong  \big (\enn_k( \alb(\Xc_n))\big)^m \oplus \alb(\Xc_n)[m](k).$$
Since  $\alb(\Xc_n)[m](k)$ is a trivial or a finite group, so it does not distribute to the rank of 
$\widetilde{\alb(\Xc_n)} (\KK) $  as a $\Z$-module, but   the rank of $\enn_k( \alb(\Xc_n))$ does. 
We recall that $d_n$ is the dimension of  Albanese variety $\alb(\Xc_n).$

First,  we let   $n=3$ and assume that $\alpha(\Xc_3)$ is a surface. Then, by the part (ii)  of Proposition \ref{p2}, we have $\alb(\Xc_3) \cong \Ee_{\rho}^{d_3}$ which implies that 
$$\displaystyle \enn_k(\alb(\Xc_3)) \otimes \Q =M_{d_3}\left( \enn_k(\Ee_{\rho}) \otimes \Q \right) ,$$ where
$\Ee_{\rho}: y^2=x^3+1$ and $M_*(\cdot)$ denotes the ring of  $(*, *)$-matrices with entries in its origin $(\cdot)$.
Since we have assumed that $\rho=\zeta_3 \in k$, we have  $\enn_k(\Ee_{\rho})\cong \Z[\rho]$ which is of rank 2 as a $\Z$-module.  Thus, $M_{d_3}(\enn_k(\Ee_{\rho}) \otimes \Q )$ has rank $d_3^2$  as a $\Z[\rho]$-module and
hence it is of rank $2 d_3^2$  as a $\Z$-module.
Therefore,  we conclude that
$$\rk(\widetilde{\alb(\Xc_3)} (\KK))=m\cdot \rk(\enn_k(\alb(\Xc_3)))= 2m d_3^2. $$
Similar arguments work for the case $n=4$ and $6$, where in the former case we have to use the fact that  
$\enn_k(\Ee_{i}) \cong \Z[i]$.

Second, we consider the case $n=5$. By Proposition \ref{p2} (iv), $d_5$ is an even number and we have
$\alb(\Xc_5)=J(C_1)^{d_5/2}$, where $C_1: y^2=x^5+1$. This   implies that
$\displaystyle \enn_k(\alb(\Xc_5)) \otimes \Q =M_{d_5/2}(\enn_k(J(C_1)) \otimes \Q ).$ 
We note that the Jacobian variety $J(C_1)$ is a simple abelian variety and its endomorphism ring
$\enn_k(J(C_1))$ contains  $\Z[\zeta_5]$ as a $\Z$-submodule of   rank $4$.
Thus, $M_{d_5/2}(\enn_k(J(C_1)) \otimes \Q)$ contains $M_{d_5/2}(\Q(\zeta_5))$.
This gives us that $\enn_k(\alb(\Xc_5))$
has rank at least  $d_5^2$ as a $\Z$-module.
Therefore,  
$$\rk(\widetilde{\alb(\Xc_5)} (\KK))=m\cdot \rk(\enn_k(\alb(\Xc_5)))\geq   m d_5^2. $$
A similar arguments leads to the proof in the  case $n=10$.

Finally, we consider the case $n=8$ and leave  $n=12$ for the reader.
The part (v) of Proposition \ref{p2} implies  that $d_8$ is an even number  and  one has
$\alb(\Xc_8)=J(C_2)^{n_1} \times \Ee_i^{2 n_2}$,
where $C_2$ is the normalization of the projective closure of the affine curve $ y^2=x^5+x$  and $n_1$ and $n_2$ are positive integers such that $n_1+n_2=d_8/2$. Thus, 
$$\displaystyle \enn_k(\alb(\Xc_5)) \otimes \Q =M_{n_1}(\enn_k(J(C_2))) \oplus  M_{2n_2}(\enn_k(\Ee_i)).$$
Since the Jacobian variety $J(C_2)$ splits as the product of the  elliptic curves,
$$\Ee_1:  y^2=x^3+x^2-3x+1, \ \text{and} \ \Ee_2:  y^2=x^3-x^2-3x+1,$$
we have $\enn_k(J(C_2))=\enn_k(\Ee_1)\oplus \enn_k(\Ee_2)$ and hence
$$\displaystyle \enn_k(\alb(\Xc_5)) \otimes \Q =M_{n_1}(\enn_k(\Ee_1)) \oplus M_{n_1}(\enn_k(\Ee_2)) \oplus M_{2n_2}(\enn_k(\Ee_i)).$$
One can check that the endomorphism ring of $\Ee_1$ and $\Ee_2$ contains the rings $\Z[\sqrt{-2}]$ and $\Z[\sqrt{-3}]$, respectively, which are 
of rank $2$ as $\Z$-modules.  This means that the rank of 
$\enn_k(J(C_2))$ is at least $ 4$ as a $\Z$-module.
Thus,  considering the fact  $\enn_k(\Ee_i)=\Z[i]$ and $n_1+n_2=d_8/2$ , one can conclude that
$\enn_k(\alb(\Xc_5)) $ is a $\Z$-module of rank at least $4(n_1^2+n_2^2)=d_8^2-8 n_1 n_2$. Hence,
$$\rk(\widetilde{\alb(\Xc_8)} (\KK))\geq m\cdot (d_8^2-8 n_1 n_2). $$
We refer the reader to \cite{ref17} for  more details on the above assertions on the elliptic curves $\Ee_i$'s, the Jacobians  $J(C_i)$'s, for $i=1,2$, and their endomorphism rings.
Therefore, we have completed the proof of Theorem \ref{main2} as desired.

To prove Theorem \ref {main3}, we note that 
the Albanese image $\alpha (\Xc_6)$ is a surface by Theorem \ref{kuliko}. 
Thus $\alb(\Xc_6)\cong \Ee^{d_6}_\rho$ by  Proposition \ref{p2} (ii). 
Then, applying Theorem \ref{main2} leads to the equality in the first part of  \ref{main3},
$$ \label{lteq}
\rk(\widetilde{\alb(\Xc_6)} (\KK)) = \rk(\alb(\tilde{\Xc}_6)(\KK))= 2 m d_6. $$
If we assume that  $\Xc_6$ is associated to $\Cc$, the dual of a smooth cubic $\Ee$ or 
the projective model of the affine curve  $$C: f(x,y)=x^3 -3 xy (y^3-8) + 2(y^6 -20 y^3 -8)=0,$$  then
the above equality  and Proposition \ref{p3} proves 
the last assertion of Theorem \ref {main3}.

\bigskip

\end{document}